\newtheorem{theorem}{Theorem}[section]
\newtheorem{lemma}[theorem]{Lemma}
\theoremstyle{definition}
\newtheorem{definition}[theorem]{Definition}
\theoremstyle{remark}
\numberwithin{equation}{section}
\begin{document}
\title{A Rigidity Condition for Generators in Strongly Convex
Domains}
\author{Tiziano Casavecchia}
\address{Mathemathics Department L. Tonelli of Pisa University, Largo B.
Pontecorvo 5, Pisa, 56127, Italy}
\email{casavecc@mail.dm.unipi.it}
\date{14-OCTOBER-2008}
\subjclass[2000]{Primary 32; Secondary 37}
\keywords{semigroups, infinitesimal generator}
\begin{abstract}
Let \(F\) be an infinitesimal generator of a semigroup of holomorphic self-maps
in a smooth strongly convex subdomain \(D\) of \(\mathbb{C}^{n}\). We prove
that \(F\equiv 0\) on \(D\) if \(F\) vanishes in angualr sense at a boundary
point up to third order.
\end{abstract}
\maketitle

A \textit{semigroup} \(\Phi_{t}\) of holomorphic maps in a subdomain \(D\) of
\(\mathbb{C}^{n}\) is a continuous homomorphism from the additive semigroup
\((\mathbb{R}^{+},+)\) into the semigroup \textit{Hol}(\(D\),\(D\)) of
holomorphic
self-maps of \(D\), respect to the operation of composition, endowed with
the
topology of uniform convergence on compacts subsets.
We know (\cite[Section ~2.5.3]{abatelibro}, \cite{shoikhetlibro} and
\cite{shoikhetlibro2}) that the function \([0,+\infty) \ni t \mapsto
\Phi_{t}\in \text{\textit{Hol}}(D,D)\) is analytic, and that to each such
semigroup
there corresponds a vector field \(F:D \rightarrow \mathbb{C}^{n}\) (as usual we
identify \(\mathbb{C}^{n}\) with its tangent space),
such that \(\frac{\partial \Phi_{t}}{\partial t}=F(\Phi_{t})\) (It should be
noted
that the book \cite{shoikhetlibro} uses a different sign convention, so some
formulas may appear a bit different). This vector field is usually called the
\textit{infinitesimal} \textit{generator} of the semigroup. It is a
\textit{semicomplete} vector field, in the sense that each maximal solution
\(\gamma _{z}\), with \(\gamma (0)=z\), can be extended up to \(+\infty\).
On the other hand, let \(D\) be a subdomain of \(\mathbb{C}^{n}\) and
\(F:D\rightarrow \mathbb{C}^{n}\) a
holomorphic map; if, for each
\(z\in D\), the Cauchy problem given by
\begin{equation*}\begin{cases}\dfrac{\partial\Phi_{z}(t)}{\partial
t}=F(\Phi_{z}(t))\\
\Phi_{z}(0)=z\\ \end{cases}\end{equation*}
has a unique maximal solution defined on \([0,+\infty)\), then \(F\) is
the infinitesimal generator of a unique one-parameter semigroup of holomorphic
self-maps of \(D\).

Not all holomorphic maps defined on a domain \(D\) are infinitesimal
generators of some semigroups of holomorphic self-maps of \(D\); for
\(D=\Delta\) (the open unit disc of \(\mathbb{C}\)), we have the following
powerful representation formula found by Berkson and Porta
(\cite{berksonporta} and the books \cite{abatelibro}, \cite{shoikhetlibro}
and \cite{shoikhetlibro2}):
\begin{theorem}[Berkson-Porta]
Let \(f:\Delta\rightarrow\mathbb{C}\) be a holomorphic function. Then \(f\) is
the infinitesimal generator of a semigroup of holomorphic self-maps of
\(\Delta\) if and only if there exists \( b\in\overline{\Delta}\) and a
holomorphic function \(p:\Delta \rightarrow \mathbb{C}\), such that \(\Re p
\geq
0\) and \begin{equation}\label{bp}
f(\xi)=(\xi - b)(\overline{b} \xi - 1)p(\xi).
\end{equation}
Furthermore \(b\) and \(p:\Delta\rightarrow\mathbb{C}\) are uniquely determined
by \(f\).
\end{theorem}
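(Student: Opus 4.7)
\medskip
\noindent\textbf{Proof proposal.}
My plan is to treat necessity and sufficiency separately, using the Denjoy--Wolff theorem for the generated semigroup to locate the point $b$ and Schwarz/Julia-type boundary estimates to extract the sign of $\Re p$. For sufficiency, assume $f(\xi)=(\xi-b)(\bar b\xi-1)p(\xi)$ with $\Re p\ge 0$; I would verify that $f$ is semicomplete by checking that trajectories cannot exit $\Delta$. The decisive identity, valid on the unit circle, is
\begin{equation*}
\bar\xi\,(\xi-b)(\bar b\xi-1)=-|1-\bar b\xi|^2\quad\text{for }|\xi|=1,
\end{equation*}
which yields $\Re(\bar\xi f(\xi))\le 0$ in a boundary sense and hence $\tfrac{d}{dt}|\Phi_t(\xi)|^2\le 0$ near $\partial\Delta$; combined with a standard monotonicity argument (exploiting $\Re p\ge 0$ after possibly conjugating by a M\"obius map), one obtains a global solution of the Cauchy problem inside $\Delta$, so by the criterion recalled at the start of the paper $f$ generates a semigroup.

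For necessity, suppose $f\not\equiv 0$ generates $\{\Phi_t\}_{t\ge 0}$. I would apply the Denjoy--Wolff theorem to the iterates $\{\Phi_{nt_0}\}_{n\in\mathbb N}$ for a fixed $t_0>0$: either the $\Phi_t$ share a common fixed point $b\in\Delta$, or there is a unique boundary point $b\in\partial\Delta$ with $\Phi_t(\xi)\to b$ for every $\xi\in\Delta$. In the interior case $\Phi_t(b)=b$ for all $t$ forces $f(b)=0$, and since $|b|<1$ makes $\bar b\xi-1$ non-vanishing on $\overline\Delta$, the quotient $p(\xi):=f(\xi)/[(\xi-b)(\bar b\xi-1)]$ is holomorphic on $\Delta$. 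In the boundary case $(\xi-b)(\bar b\xi-1)=\bar b(\xi-b)^2$ never vanishes on $\Delta$, so $p$ is automatically holomorphic.

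It remains to extract $\Re p\ge 0$ from the dynamics. In the interior case, conjugation by $\sigma_b(\xi)=(\xi-b)/(1-\bar b\xi)$ reduces matters to $b=0$; Schwarz's lemma applied to each $\Phi_t$ (which fixes $0$) gives $|\Phi_t(\xi)|\le|\xi|$, and differentiating at $t=0$ produces $\Re(\bar\xi f(\xi))\le 0$, equivalent via the identity above to $\Re p\ge 0$. In the boundary case the Julia--Wolff--Carath\'eodory theorem provides horodiscs $E(b,R)=\{\xi:|1-\bar b\xi|^2/(1-|\xi|^2)<R\}$ that are invariant under every $\Phi_t$; differentiating this invariance in $t$ at $t=0$ translates directly into $\Re p\ge 0$. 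Uniqueness of $b$ and $p$ then follows because $b$ is characterised as the Denjoy--Wolff attractor of the semigroup and $p$ is read off algebraically from $f$.

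The step I expect to be the main obstacle is the boundary case: because $b$ lies on $\partial\Delta$, Schwarz's lemma cannot be applied directly, and converting horodisc invariance into a pointwise inequality on $p$ requires the full non-tangential framework of Julia--Wolff--Carath\'eodory rather than an interior maximum principle. One must in particular justify that the $t$-derivative of the horodisc condition at $t=0$ extends from the interior estimate to a bona fide inequality for $\Re p$ on all of $\Delta$, which is where the Berkson--Porta formula genuinely exploits the boundary rigidity of holomorphic self-maps.
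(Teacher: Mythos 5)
This theorem is quoted in the paper as a known result of Berkson and Porta, with references to \cite{berksonporta} and the books cited there; the paper supplies no proof of its own, so there is nothing internal to compare your argument against. Your outline is the standard one (Denjoy--Wolff plus Julia's lemma for necessity, flow invariance for sufficiency), and your boundary identity $\bar\xi(\xi-b)(\bar b\xi-1)=-|1-\bar b\xi|^2$ for $|\xi|=1$ is correct. The necessity half is essentially sound, modulo the routine points that the iterates $\Phi_{nt_0}$ may be elliptic automorphisms (so Denjoy--Wolff gives only a fixed point, not an attractor) and that one must check the Denjoy--Wolff point is common to all $t$.

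The genuine gap is in sufficiency. The inequality $\Re(\bar\xi f(\xi))\le 0$ holds only on $\partial\Delta$; it fails inside the disc. For instance, with $b=1$ and $p\equiv 1$ one has $f(\xi)=(1-\xi)^2$ and $\Re(\bar\xi f(\xi))=r(1-r)^2>0$ at $\xi=r\in(0,1)$, so $\tfrac{d}{dt}|\Phi_t|^2\le 0$ is false pointwise, even ``near $\partial\Delta$'': when $b\in\partial\Delta$ the trajectories converge to $b$, so $|\Phi_t(\xi)|$ actually increases to $1$. A boundary-limit sign condition on $\Re(\bar\xi f)$ does not by itself yield forward invariance without a quantitative flow-invariance (range) condition. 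The clean repair is the conjugation you mention only in passing: for $b\in\Delta$ reduce to $b=0$, where $f(\xi)=-\xi p(\xi)$ gives $\tfrac{d}{dt}\log|\Phi_t|^2=-2\Re p(\Phi_t)\le 0$ honestly on all of $\Delta$; for $b\in\partial\Delta$ push the field forward by the Cayley transform $C(\xi)=(b+\xi)/(b-\xi)\cdot$(suitable normalization) to the right half-plane, where it becomes $2p\circ C^{-1}$ with nonnegative real part, so $\Re w(t)$ is nondecreasing and the solution is global. That conjugation is not an optional simplification but the step that makes the sufficiency argument valid; as written, your monotonicity claim is the part that would fail.
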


For us a \textit{rigidity condition} is a sufficient condition on \(F\)
ensuring \(F\) has to vanish on \(D\). Since the vanishing of \(F\) on \(D\)
is
equivalent to the semigroup generated by \(F\) being trivially the identity map on \(D\), a rigidity condition
on \(F\) may be linked to some condition on a self map of \(D\) that forces it to be the identity map.
In this line of arguments D. M. Burns and S. G. Krantz published an article
(\cite{burnskrantz}) providing a condition under which a holomorphic map \(
\phi :\Delta\rightarrow\Delta\) is the identity:
\begin{theorem}
Let \(\phi :\Delta\rightarrow\Delta\) be a holomorphic map and \(\tau\in
\partial\Delta\). If \begin{equation*}
\phi (\xi)= \tau +(\xi -\tau)+O(\lvert \xi -\tau\rvert ^4)
\end{equation*}
for \(\xi\rightarrow 1\), then \(\phi \equiv id_{\Delta}\).
\end{theorem}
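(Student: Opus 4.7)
The plan is to compare $\phi$ with the identity through the Cayley transform of the disc onto the right half-plane, producing a holomorphic function of non-negative real part, and then to invoke a boundary rigidity statement for such functions. After a rotation I may assume $\tau = 1$.

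I introduce
\begin{equation*}
p(\xi) = \frac{1+\phi(\xi)}{1-\phi(\xi)} - \frac{1+\xi}{1-\xi} = \frac{2(\phi(\xi) - \xi)}{(1-\phi(\xi))(1-\xi)},
\end{equation*}
which is holomorphic on $\Delta$. The hypothesis forces $\phi(\xi) \to 1$ and $(\phi(\xi) - 1)/(\xi - 1) \to 1$ as $\xi \to 1$, so $\phi$ has angular limit $1$ at $\tau = 1$ with angular derivative equal to $1$. Julia's lemma then gives
\begin{equation*}
\frac{|1-\phi(\xi)|^2}{1 - |\phi(\xi)|^2} \leq \frac{|1-\xi|^2}{1 - |\xi|^2},
\end{equation*}
which is equivalent to $\Re p \geq 0$ on $\Delta$. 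A direct computation using $\phi(\xi) - \xi = O(|1-\xi|^4)$ and $(1-\phi(\xi))(1-\xi) = (1-\xi)^2(1 + O(|1-\xi|^3))$ then yields $p(\xi) = O(|1-\xi|^2)$, so $p$ extends continuously to $\xi = 1$ with $p(1) = 0$.

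Next, the non-negative harmonic function $u = \Re p$ extends continuously to $\xi = 1$ with $u(1) = 0$ and satisfies $u(r) = O((1-r)^2) = o(1-r)$ as $r \to 1^-$ along the radius. Hopf's boundary point lemma applied at $\xi = 1$ forces $u \equiv 0$ on $\Delta$, so by the Cauchy--Riemann equations $p$ is a purely imaginary constant, which must equal $0$ since $p(\xi) \to 0$. Hence $\frac{1+\phi(\xi)}{1-\phi(\xi)} = \frac{1+\xi}{1-\xi}$ on $\Delta$, giving $\phi(\xi) \equiv \xi$.

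The main obstacle I anticipate is the passage through Julia's lemma: one must extract the angular limit and the angular derivative at $\tau$ from the $O$-hypothesis, and, more importantly, recognize that the correct auxiliary object is the Carath\'eodory function $p$ rather than $\phi - \mathrm{id}$ itself. A secondary point is that the exponent $4$ in the hypothesis is sharp for this method: it produces precisely order-$2$ vanishing of $p$ at $\tau$, which is exactly the threshold at which Hopf's lemma can close the argument.
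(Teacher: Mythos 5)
The paper does not prove this statement at all: it is quoted verbatim from Burns and Krantz (\cite{burnskrantz}) as background motivation, so there is no in-paper proof to compare against. Your argument is, however, correct and essentially reproduces the classical Burns--Krantz proof: the identity $p(\xi)=\frac{1+\phi(\xi)}{1-\phi(\xi)}-\frac{1+\xi}{1-\xi}=\frac{2(\phi(\xi)-\xi)}{(1-\phi(\xi))(1-\xi)}$ is right; the hypothesis does give angular limit $1$ and angular derivative $1$ at $\tau=1$ (indeed $\liminf_{\xi\to1}\frac{1-|\phi(\xi)|}{1-|\xi|}\le 1$ along the radius), so Julia's lemma yields $\Re p\ge 0$; the expansion $1-\phi(\xi)=(1-\xi)(1+O(|1-\xi|^{3}))$ gives $p(\xi)=O(|1-\xi|^{2})$; and the Hopf-type boundary point lemma for the non-negative harmonic function $u=\Re p$ (equivalently, the Herglotz representation, which gives $u(r)\ge c(1-r)$ unless $u\equiv 0$) closes the argument, since $u(r)=o(1-r)$. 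Two minor remarks: the step from $u\equiv 0$ to $p$ constant should be credited to the open mapping theorem or the maximum principle rather than loosely to the Cauchy--Riemann equations; and your sharpness remark is slightly off --- this method only needs $p(\xi)=o(|1-\xi|)$ along the radius, so the hypothesis $\phi(\xi)=\xi+o(|\xi-\tau|^{3})$ already suffices, which is precisely the refinement of Tauraso--Vlacci and Bracci--Tauraso--Vlacci mentioned in the paper. Neither point is a gap.
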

Furthermore they extend this result to \(\mathbb{B}^{n}\), the open unit ball of
\(\mathbb{C}^{n}\) and even to more general domains; this theorem is a
kind of Schwarz lemma at the boundary.

In recent years R. Tauraso and F. Vlacci (\cite{taurasovlacci}) and F. Bracci,
R. Tauraso and F. Vlacci (\cite{bracci4}) improved this result showing that
the unrestricted limit can be replaced by an angular one or, being equivalent in
\(\Delta\), by the limit along a non tangential curve. They used this to prove,
in the last paper cited above, an identity principle for commuting self-maps
of
\(\Delta\). Further generalizations of these results to more general situations
can be found for instance in \cite{huang}, \cite{baracco} and \cite{migliorini}.

As first discovered by A. Kor\'anyi and E. M. Stein (\cite{koranyi} and
\cite{koranyistein}), the right (for function theory) generalization to the unit ball in several
complex variables of angular regions in \(\Delta\) is not a conic region, but
the following
set:\begin{equation*}K(\tau,M)=\{z\in\mathbb{B}^{n}\mid\frac{\lvert
1-(z,\tau)\rvert}{1-\lVert z\rVert} « M\},\end{equation*}where \(\tau\) is a
point in the boundary of \(\mathbb{B}^{n}\), \((\cdot,\cdot)\) is the
standard hermitian product in \(\mathbb{C}^{n}\) and \(M »1\). These regions are
called \textit{Kor\'anyi regions of center \(\tau\) and amplitude \(M\)}. In
\(\Delta\) these regions are just egg-shape sectors with a corner at
\(\tau\), symmetric with respect to the line segment from \(0\) to \(\tau\). In
\(\mathbb{B}^{n}\) they are
angular only along the direction normal to the boundary of \(\mathbb{B}^{n}\)
and tangential in all others directions. Kor\'anyi regions can be introduced
also in Hilbert spaces. 
Using Kor\'anyi regions we can define \(K\)-\textit{limit} at any point
\(\tau\)
in the boundary of \(D\); we say that \(F:D\rightarrow \mathbb{C}^{n}\) has
\(K\)-limit \(L\) and we write \begin{equation*}K\text{-}\lim
_{z\rightarrow \tau}F(z)=L\end{equation*} if \(\lim F(z)=L\) for \(z\rightarrow
\tau\)
inside the Kor\'anyi region \(K(\tau,M)\) with center \(\tau\), for any \(M
»1\). In \(\Delta\) for a function to have \(K\)-limit, is the same that to
have angular limit. 

In 2007 M. Elin, M. Levenshtein, S. Reich, D. Shoikhet (\cite{shoikhet1})
proved the following couple of theorems:
\begin{theorem}\label{shoresult1a}Let \(f:\Delta
\rightarrow \mathbb{C}\) be the infinitesimal generator
of a semigroup of holomorphic self-maps of \(\Delta\). If
\begin{equation}\label{eqshoikhetresult1a}
K\text{-}\lim_{\xi \rightarrow \tau}\frac{f(\xi)}{\vert \xi-1 \vert^{3}} =
0,
\end{equation}then \(f\equiv 0\) on \(\Delta\).
\end{theorem}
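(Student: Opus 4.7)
The natural starting point is the Berkson--Porta formula: write $f(\xi)=(\xi-b)(\overline{b}\xi-1)\,p(\xi)$ with $b\in\overline{\Delta}$ and $\Re p\ge 0$. After a harmless rotation we may assume $\tau=1$. The polynomial factor $P(\xi)=(\xi-b)(\overline{b}\xi-1)$ satisfies $P(1)=-|1-b|^{2}$, so it is nonzero at $\xi=1$ unless $b=1$, in which case $P(\xi)=-(1-\xi)^{2}$. In the first case the hypothesis $K\text{-}\lim_{\xi\to 1}f(\xi)/|\xi-1|^{3}=0$ is equivalent to $K\text{-}\lim p(\xi)/|1-\xi|^{3}=0$; in the second case it reads $K\text{-}\lim p(\xi)/|1-\xi|=0$. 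In both situations the conclusion we need is the same and (since the extra factor $(1-\xi)^{2}$ only helps) reduces to a single key claim:

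\medskip
\emph{Rigidity for Carathéodory functions.} If $p:\Delta\to\mathbb{C}$ is holomorphic with $\Re p\ge 0$ and $K\text{-}\lim_{\xi\to 1} p(\xi)/(1-\xi)=0$, then $p\equiv 0$.

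\medskip

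To prove this I would use the Herglotz representation
\[
p(\xi)=ic+\int_{\partial\Delta}\frac{e^{i\theta}+\xi}{e^{i\theta}-\xi}\,d\mu(\theta),\qquad c\in\mathbb{R},\ \mu\ge 0.
\]
Restricting to $\xi=r\in(0,1)$ (which lies in every Korányi region at $1$, so the $K$-limit assumption forces the radial limit to vanish as well), a direct computation yields
\[
\frac{\Re p(r)}{1-r}=\int_{\partial\Delta}\frac{1+r}{|e^{i\theta}-r|^{2}}\,d\mu(\theta).
\]
The left-hand side tends to $0$. Fatou's lemma then gives $\int_{\partial\Delta}\frac{2}{|e^{i\theta}-1|^{2}}\,d\mu(\theta)\le 0$, and since the integrand is strictly positive on $\partial\Delta\setminus\{1\}$ this forces $\mu$ to be supported at $\{1\}$. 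An atom $\mu(\{1\})=m>0$, however, would make $p(r)/(1-r)$ behave like $m(1+r)/(1-r)^{2}\to\infty$, contradicting the hypothesis; hence $\mu\equiv 0$, so $p\equiv ic$. Finally $ic/(1-\xi)\to 0$ along a non-tangential approach forces $c=0$, i.e.\ $p\equiv 0$ and therefore $f\equiv 0$.

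\medskip

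\textbf{Main obstacle.} The algebraic case split and the elementary estimate on $\Re p(r)/(1-r)$ are routine; the delicate point is the control of the measure $\mu$ near the boundary point $1$. Specifically, one must simultaneously rule out (i) any atom of $\mu$ at $\tau=1$ (handled by the blow-up $|1+\xi|/|1-\xi|^{2}$ of the Herglotz kernel) and (ii) any mass on $\partial\Delta\setminus\{1\}$ (handled by the Fatou argument applied to the pointwise limit $2/|e^{i\theta}-1|^{2}$). A potentially cleaner alternative would be to invoke the Julia--Wolff--Carathéodory theorem directly for maps $\Delta\to\overline{\mathbb{H}}$, concluding from the vanishing of the angular ``derivative'' of $p$ at $1$ that the corresponding Julia number is zero and hence $p$ is constant; but the Herglotz route above is self-contained and makes the role of the cubic order of vanishing of $f$ transparent.
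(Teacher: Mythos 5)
Your argument is correct, and it is worth pointing out that the paper itself does not prove this statement: Theorem \ref{shoresult1a} is imported from \cite{shoikhet1}, and the paper only records that the original proof uses ``dynamics properties of $f$'' and the uniqueness of $b$ and $p$ in the Berkson--Porta representation \eqref{bp}. Your route shares the first step (factor $f(\xi)=(\xi-b)(\overline{b}\xi-1)p(\xi)$ and split according to $b=\tau$ or $b\neq\tau$, which in either case converts the cubic vanishing of $f$ into the first-order vanishing $p(\xi)/(1-\xi)\to 0$), but then replaces the dynamical/angular-derivative analysis by a direct Herglotz-measure computation: the identity $\Re p(r)/(1-r)=\int(1+r)\lvert e^{i\theta}-r\rvert^{-2}\,d\mu(\theta)$ together with Fatou's lemma kills the mass of $\mu$ away from $1$, the blow-up of the kernel kills an atom at $1$, and the residual constant $ic$ is killed by the same limit. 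This is self-contained and in fact proves a slightly stronger statement, since only the radial limit is used while the $K$-limit hypothesis is a priori stronger. Two small remarks: when $b=1$ the polynomial factor is $(\xi-1)^{2}=+(1-\xi)^{2}$, not $-(1-\xi)^{2}$ (harmless for the modulus estimate); and the Fatou step can be shortcut by noting $(1+r)\lvert e^{i\theta}-r\rvert^{-2}\geq (1+r)^{-1}\geq 1/2$ uniformly in $\theta$, so that $\Re p(r)/(1-r)\geq \mu(\partial\Delta)/2$ and the vanishing radial limit gives $\mu=0$ in one stroke.
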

\begin{theorem}\label{shoresult}Let
\(\mathbb{B}\) be the unit ball in a
Hilbert
space \textup{\textbf{H}}, \(\tau \in\partial\mathbb{B}\)
and \(F:\mathbb{B} \rightarrow \textup{\textbf{H}}\) the infinitesimal generator
of a semigroup of holomorphic self-maps of \(\mathbb{B}\). If
\begin{equation}\label{eqshoikhetresult}
K\text{-}\lim_{z \rightarrow \tau}\frac{F(z)}{\lVert z-\tau \rVert^{3}} =
0,
\end{equation}then \(F\equiv 0\).
\end{theorem}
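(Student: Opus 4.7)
The natural strategy is to reduce to the one-dimensional Theorem \ref{shoresult1a} by restricting $F$ to complex geodesics in $\mathbb{B}$ that terminate at $\tau$, exploiting the fact that complex geodesics in the Hilbert ball are simply affine discs.

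Concretely, for each $z_{0}\in\mathbb{B}$ I would fix the (unique) complex geodesic $\phi:\Delta\to\mathbb{B}$ through $z_{0}$ with $\phi(1)=\tau$, together with its holomorphic left inverse $\tilde\rho:\mathbb{B}\to\Delta$ (so $\tilde\rho\circ\phi=\mathrm{id}_{\Delta}$). Define
\begin{equation*}
\psi_{t}:=\tilde\rho\circ\Phi_{t}\circ\phi:\Delta\to\Delta,\qquad g(\xi):=d\tilde\rho|_{\phi(\xi)}\bigl(F(\phi(\xi))\bigr).
\end{equation*}
One checks that $\psi_{0}=\mathrm{id}_{\Delta}$ and $\partial_{t}\psi_{t}|_{t=0}=g$, so that provided $\{\psi_{t}\}$ is a semigroup, $g$ is its infinitesimal generator. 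Since $\phi$ meets $\partial\mathbb{B}$ transversally at $\tau$, nontangential approach $\xi\to 1$ in $\Delta$ corresponds to Korányi approach $\phi(\xi)\to\tau$ in $\mathbb{B}$; combined with a Julia-type boundary control on $d\tilde\rho$, the hypothesis \eqref{eqshoikhetresult} transfers to the angular condition \eqref{eqshoikhetresult1a} for $g$. Theorem \ref{shoresult1a} then forces $g\equiv 0$, so $d\tilde\rho(F)$ vanishes along $\phi(\Delta)$, and in particular at $z_{0}$. Varying $\phi$ over the complex geodesics through $\tau$ and $z_{0}$ produces enough linear functionals to conclude $F(z_{0})=0$.

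The main obstacle is checking that $\{\psi_{t}\}$ is genuinely a one-parameter semigroup on $\Delta$: this reduces to the flow-invariance $\Phi_{t}(\phi(\Delta))\subset\phi(\Delta)$, which is not automatic for arbitrary semigroups on $\mathbb{B}$. I would try to extract this invariance from the strong vanishing \eqref{eqshoikhetresult} via a boundary Schwarz-type uniqueness argument applied to the scalar restriction $\langle\Phi_{t}(\xi\tau),\tau\rangle$ combined with the Berkson-Porta representation \eqref{bp}, or alternatively bypass the semigroup property by checking directly that the pullback $g$ admits a Berkson-Porta factorisation on $\Delta$ and is therefore a generator there. Both routes rely on the explicit Hilbert-ball formulas for $\phi$ and $\tilde\rho$, and this interplay between complex-geodesic geometry and the $K$-limit hypothesis is where the technical heart of the argument lies.
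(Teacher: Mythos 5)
First, note that the paper does not actually prove Theorem \ref{shoresult}: it is quoted from \cite{shoikhet1}, where (as the paper remarks) the proof uses the automorphisms of the Hilbert ball to reduce to the one-dimensional Theorem \ref{shoresult1a}. Your plan instead mirrors the strategy this paper uses for its own main result, Theorem \ref{theo4}: push \(F\) down to the disc along complex geodesics through \(\tau\) via the dual map and apply the one-dimensional rigidity theorem to each slice. That strategy can be made to work, but as written your proposal has two genuine gaps, one of which is fatal as stated.

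The fatal one is the last step. Through a given point \(z_{0}\in\mathbb{B}\) there is exactly \emph{one} complex geodesic whose closure contains \(\tau\) (in the ball it is the intersection of \(\mathbb{B}\) with the complex affine line through \(z_{0}\) and \(\tau\)). Hence ``varying \(\phi\) over the complex geodesics through \(\tau\) and \(z_{0}\)'' produces exactly one linear functional, and \(g(\xi_{0})=0\) only tells you that \(F(z_{0})\) lies in a complex hyperplane --- namely the complex tangent space at \(z_{0}\) to the boundary of the horosphere of center \(\tau\) through \(z_{0}\) --- not that it vanishes. The paper needs a further, nontrivial argument to close exactly this gap (Theorem \ref{theo3}): the scalar identity \((\partial\Omega_{\tau,p})_{z}F(z)\equiv 0\) holds at \emph{every} \(z\in D\), so one may apply \(\overline{\partial}\) to it, use holomorphy of \(F\), and invoke strict pseudoconvexity of horospheres (positive definiteness of the Levi form on the complex tangent space) to force \(F(z)=0\). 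Some analogue of this Levi-form step --- or of the automorphism argument of \cite{shoikhet1} --- must be supplied before you can conclude. The second gap is the generator-transfer step, which you correctly flag but do not resolve: the flow-invariance \(\Phi_{t}(\phi(\Delta))\subseteq\phi(\Delta)\) is simply false for general semigroups, and trying to extract it from \eqref{eqshoikhetresult} is circular, since that hypothesis concerns only the single boundary point \(\tau\). The viable route is your second one, and it is precisely what the paper carries out via Theorem \ref{prop1} and Lemma \ref{lemma1a}: the infinitesimal criterion \((dk_{D})_{(z,w)}(F(z),F(w))\leq 0\) transfers to the slice through the projection device without any invariance of \(\phi(\Delta)\) under the flow, yielding Theorem \ref{teo2}. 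As it stands in your proposal, that step is asserted rather than proved.
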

They prove the former using dynamics properties of \(f\) and unicity of
\(b\) and \(p:\Delta\rightarrow\mathbb{C}\) in \eqref{bp}.
Their proof of the latter uses automorphisms of the unit ball in a
Hilbert space to reduce to the one dimensional case.
So, on one side it is clear that a weaker limit than the unrestricted one is
sufficient to guarantee a rigidity condition, on the other the intervention
of Kobayashi distance in the ball suggests possible generalizations involving
the
Kobayashi distance in more general domains.

In \cite{abatederivateangolari1}, Abate found a generalization of Kor\'anyi
region to any smooth strictly convex domain \(D\) of \(\mathbb{C}^{n}\). That
is, given \(\tau\) in the boundary of \(D\) and \(x\) in \(D\),
a \textit{Kor\'anyi region of center \(\tau\), pole \(x\) and amplitude \(M\)}
is the set of points \begin{equation*}K^{D}(\tau ,p,M)=\{z\in D\mid
\lim_{w\rightarrow \tau}[k_{D}(z,w)-k_{D}(p,w)]+k_{D}(p,z)«\log
M\}.\end{equation*} Here \(k_{D}\) is the Kobayashi distance in \(D\). In
\cite{abateorosfere} Abate proved that the previous limit exists and in
\cite{abatederivateangolari1} he proved that \(K^{\mathbb{B}^n}(\tau ,0,M)\)
and \(K(\tau,M)\) are the same set (see also \cite[Chapter ~2.7]{abatelibro}
and \cite{abatederivateangolari2}).
We now recall the notions
needed to understand the statement of Theorem \ref{maintheorem} and refer to
section 1 for more details and bibliography.

Again, \(D\) denotes a smooth domain of \(\mathbb{C}^{n}\) and \(\tau\) a point in the \(\partial D\), the boundary
of \(D\). We call a continuous curve \(\alpha :[0,1)\rightarrow D\) a
\textit{\(\tau\)-curve} if \(\lim_{t\rightarrow 1^{-}}\alpha (t)=\tau\). A
\textit{conic region} at 
\(\tau\) of amplitude \(c\) is the set of points \begin{equation*}\{z\in D\mid
\lVert
z-\tau\rVert\leq c\, \text{dist}(z,\partial D)\}\end{equation*} for some
\(c » 1\) where \(\text{dist}(\cdot,\cdot)\) denotes the
euclidean distance in \(\mathbb{C}^{n}\).
A
\textit{non-tangential} \(\tau\)-curve is a \(\tau\)-curve that
lies inside some conic region at \(\tau\).
Given a function \(F:D\rightarrow \mathbb{C}^{n}\), we say that \(F\) has
\textit{non-tangential limit} \(L\) at
\(\tau\) and we write \begin{equation*}\angle\lim_{z\rightarrow
\tau}F(z)=L\end{equation*}
if \(\lim_{t\rightarrow 1^{-}}F(\alpha (t))=L\) along any non-tangential
\(\tau\)-curve. For comprehensive general reference on
non-tangential limits we refer
to \cite{abatelibro},
\cite{abatederivateangolari1}, \cite{abatederivateangolari2} and \cite{rudin}.

The main
theorem of this
paper is the following.\begin{theorem}\label{maintheorem}
Let D be a bounded smooth strongly convex domain of \(\mathbb{C}^n\);
\(F:D\rightarrow
\mathbb{C}^n\) be the infinitesimal generator of a semigroup of
holomorphic self-maps of D and
\(\tau \in \partial D\). If
\begin{equation*}
\angle\lim_{z \rightarrow \tau}\frac{F(z)}{\lVert z-\tau
\rVert^{3}} =
0 ,
\end{equation*} then \(F \equiv 0\) in \( D\).
\end{theorem}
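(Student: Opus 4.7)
The natural plan is to reduce the statement to the one-dimensional result, Theorem \ref{shoresult1a}, by restricting $F$ along complex geodesics ending at $\tau$. In a bounded smooth strongly convex domain $D$, Lempert's theorem provides, for every $x\in D$, a unique complex geodesic $\varphi_x:\Delta\to D$ which is $C^{\infty}$ up to $\overline{\Delta}$ with $\varphi_x(0)=x$ and $\varphi_x(1)=\tau$, together with a holomorphic left inverse (Lempert projection) $\tilde{\varphi}_x:D\to\Delta$ satisfying $\tilde{\varphi}_x\circ\varphi_x=\mathrm{id}_\Delta$. First I would form the one-parameter family $\psi_t^x := \tilde{\varphi}_x\circ\Phi_t\circ\varphi_x$, which is a semigroup of holomorphic self-maps of $\Delta$, and identify its infinitesimal generator as the holomorphic function $f_x(\xi)=d\tilde{\varphi}_x|_{\varphi_x(\xi)}\bigl(F(\varphi_x(\xi))\bigr)$.

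The core of the argument is then to show that the hypothesis on $F$ forces $f_x$ to satisfy the one-dimensional vanishing
$$K\text{-}\lim_{\xi\to 1}\frac{f_x(\xi)}{|\xi-1|^3}=0.$$
For this I would use that the geodesic $\varphi_x$, being $C^{1}$ and transverse to $\partial D$ at $\tau$, sends a $K$-region at $1$ in $\Delta$ into a conic region at $\tau$ in $D$ (this is essentially Abate's comparison between Kor\'anyi regions in $D$ and $K$-regions in $\Delta$, refined by the transversality of geodesics in the strongly convex setting), so that the non-tangential vanishing of $F$ pulls back along $\varphi_x$ to a $K$-vanishing of $f_x$ at $1$. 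Combining this with the boundary estimate $\|\varphi_x(\xi)-\tau\|\asymp |\xi-1|$ (from $\varphi_x'(1)\neq 0$) and the boundedness of $d\tilde{\varphi}_x$ along the approach to $\tau$, I would obtain the required vanishing. Theorem \ref{shoresult1a} then yields $f_x\equiv 0$, so the projected semigroup $\psi_t^x$ is the identity on $\Delta$ for every $t\ge 0$ and every $x\in D$.

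From the triviality of $\psi_t^x$ I would extract $F\equiv 0$ as follows. The identity $\tilde{\varphi}_x\circ\Phi_t\circ\varphi_x=\mathrm{id}_\Delta$, combined with the Kobayashi-distance contractions of $\varphi_x$ and $\tilde{\varphi}_x$, squeezes $\Phi_t\circ\varphi_x$ into a Kobayashi isometry of $\Delta$ into $D$, so by the uniqueness part of Lempert's theorem $\Phi_t\circ\varphi_x$ must coincide with $\varphi_{\Phi_t(x)}$, the geodesic from $\Phi_t(x)$ to $\tau$. At $\xi=0$ this gives $\tilde{\varphi}_x(\Phi_t(x))=0$, pinning the orbit of $x$ to the Lempert fiber through $x$; differentiating in $t$ at $t=0$ yields $d\tilde{\varphi}_x|_x(F(x))=0$. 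Letting $x$ vary over $D$ and iterating the fiber-tangency constraint along the orbit via the semigroup law $\Phi_{s+t}=\Phi_s\circ\Phi_t$ should force the orbits to collapse to constants, hence $\Phi_t\equiv\mathrm{id}_D$ and $F\equiv 0$.

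I expect the main obstacle to be precisely this last globalisation. Each individual $x$ only provides the codimension-one constraint $F(x)\in\ker d\tilde{\varphi}_x|_x$, so concluding $F(x)=0$ requires a genuinely higher-dimensional input, most plausibly by intersecting the Lempert fibers through every point of a segment of the orbit and using strong convexity to shrink this intersection to a single point, or by exploiting the holomorphic dependence of $\tilde{\varphi}_y$ on $y$ together with the rigidity $\Phi_t\circ\varphi_x=\varphi_{\Phi_t(x)}$. A secondary technical point, feeding into the second paragraph, is the boundary regularity of $d\tilde{\varphi}_x$ along $K$-approach, which is needed for the sharp third-order transfer of the estimate on $F$ to the one-dimensional generator $f_x$ without loss.
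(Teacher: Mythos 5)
Your overall architecture --- restrict $F$ along the Lempert geodesics ending at $\tau$, transfer the third-order vanishing to the disc, apply Theorem \ref{shoresult1a}, then globalise --- is the paper's architecture, and your transfer of the limit (transversality of $\varphi_x$ at $\tau$ plus $\lVert\varphi_x(\xi)-\tau\rVert\asymp\lvert\xi-1\rvert$) matches the paper's Theorem \ref{theo4}. But there are two genuine gaps. First, the family $\psi_t^x=\tilde{\varphi}_x\circ\Phi_t\circ\varphi_x$ is in general \emph{not} a semigroup: $\Phi_t$ does not preserve the fibers of the Lempert projection, so $\psi_{s+t}\neq\psi_s\circ\psi_t$, and consequently you cannot identify $f_x$ as ``the generator of $\psi_t^x$'', nor deduce $\psi_t^x=\mathrm{id}$ from $f_x\equiv 0$ (for $t>0$ the orbit has left $\varphi_x(\Delta)$, so $f_x\equiv0$ only controls the derivative at $t=0$). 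The correct route, which the paper takes, is the differential characterisation of generators $(dk_D)_{(z,w)}(F(z),F(w))\leq 0$ (Theorem \ref{prop1}) together with the fact that $\varphi$ is a $k$-isometry and $\rho_\varphi$ a holomorphic retraction (Lemmas \ref{lemma111}, \ref{lemma1a}), which yields directly that $f_\varphi\in\textit{HolG}(\Delta)$ (Theorem \ref{teo2}) without ever projecting the semigroup itself.

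Second, and more seriously, the final globalisation --- which you correctly identify as the main obstacle --- is not closed by your proposal. All you obtain from $f_x\equiv 0$ is the single codimension-one constraint $F(x)\in\ker(d\tilde{\rho}_{\varphi_x})_x$ at each $x$, and since the hypothesis lives at the one boundary point $\tau$ there is only \emph{one} geodesic, hence one hyperplane, through each $x$; intersecting Lempert fibers along an orbit compares constraints at different points and does not shrink anything at a fixed $x$. The missing idea is the paper's Theorem \ref{theo3} via horospheres: for $z\in\partial E(\tau,p,R)$ the kernel of $(d\rho_\varphi)_z$ coincides with the complex tangent space $T^{\mathbb{C}}_z\partial E(\tau,p,R)=\ker(\partial\Omega_{\tau,p})_z$ (Lemma \ref{lemma2}), so the constraint becomes the identity $(\partial\Omega_{\tau,p})_zF(z)=0$ valid on all of $D$; applying $\overline{\partial}$ and using $\overline{\partial}F=0$ shows the Levi form of $\Omega_{\tau,p}$ annihilates $F(z)$, and strict pseudoconvexity of the horosphere boundaries (positive definiteness of that Levi form on $T^{\mathbb{C}}_z\partial E$) forces $F(z)=0$. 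Neither of your two suggested mechanisms supplies this input, so as written the proof does not reach $F\equiv 0$.
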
 

Theorem \ref{maintheorem} is stronger than Theorem \ref{shoresult} even in
the ball, since, as
we shall explain better in the next section, \(K\)-limits are stronger than
non-tangential ones, since each non-tangential \(\tau\)-curve
is eventually contained in some Kor\'anyi region at \(\tau\).
We furtermore remark that the last theorem holds also if \(D\) has boundary of
class \(\mathcal{C}^{k}\), \(k\geq 14\) and in bounded strictly
linearly convex domain with the same regularity at the boundary.

This paper is organized as follows. In section \ref{sezione1}
we shall recall some standard tools
needed for the proof, mainly the Lempert theory of complex geodesics. In
section 2 we will prove, following \cite{bracci3} some results allowing us
to use Theorem \ref{shoresult} about the infinitesimal generators of continuous
semigroups of holomorphic self-maps of the unit disc \( \Delta
\subseteq \mathbb{C}\). In section \ref{sezione3} we shall prove our main
result.

Finally I would like to thank mainly Marco Abate, my research director, 
for guiding me in studying this topic and problem, and Filippo Bracci, Simeon
Reich for their suggestions. 

\section{General Framework}\label{sezione1}

Let \(D\) be a bounded, smooth, strongly convex domain of
\(\mathbb{C}^n\) and denote the Kobayashi
distance in \(D\) by \(k_{D}\). A \textit{complex geodesic} is
a holomorphic map \(\varphi :\Delta \rightarrow D\) which is an isometry
beetween \(k_{\Delta}= \omega\), the Poincar\'e distance in \(\Delta\), and
\(k_{D}\). Any complex geodesic extends smoothly to the boundary, maps
\(\partial \Delta\) in \( \partial D\) and further for each \(\xi\) in
\(\partial\Delta\), \(\varphi^{\prime}(\xi)\) is transversal to the boundary of
\(D\).

To each complex geodesic \(\varphi\) is associated a "\textit{dual map}" (see
\cite[Chapter ~2.6]{abatelibro}),
\(\tilde{\varphi}:\overline{\Delta}\rightarrow \mathbb{C}^{n}\), holomorphic in
\(\Delta\), smooth up to the boundary, such that
\(\tilde{\varphi}(\xi)=\xi h(\xi)\partial r(\varphi(\xi))\) where \(\xi\) is in
\(\partial \Delta\) and \(h:\partial \Delta\rightarrow\mathbb{R}\)
is a smooth and positive function. The map \(\tilde{\varphi}\) is determined up
to a positive
constant which we normalize by requiring that \(\langle\varphi^{\prime},
\tilde{\varphi}\rangle\equiv 1\) where \(\langle \cdot,\cdot \rangle\) denotes
the
standard bilinear (not Hermitian!) form in \(\mathbb{C}^{n}\). 

Using
\(\tilde{\varphi}\) we can define a left inverse for \(\varphi\): the
holomorphic map such that for each \(z\) in \(D\), there is only one \(\xi \in
\Delta\) which solves the equation \(\langle
z-\varphi(\xi),\tilde{\varphi}(\xi)\rangle=0\). We call
\(\tilde{\rho}_{\varphi}:D\rightarrow \Delta\) the map obtained in this
way. It can be proved that \(\tilde{\rho}_{\varphi}\circ\varphi\equiv
\textup{\textit{id}}\) in
\(\Delta\). The map \(\tilde{\rho}_{\varphi}\) even
extends smoothly up to the boundary of \(D\) and the fibers of
\(\tilde{\rho}_{\varphi}\) are intersections of \(D\) with complex hyperplanes
.

We put
\(\rho_{\varphi}=\varphi\circ\tilde{\rho}_{\varphi}\); then
\(\rho_{\varphi}\circ\rho_{\varphi}=\rho_{\varphi}\) and
\(\rho_{\varphi}\circ\iota_{\varphi(\Delta)}=id_{\varphi(\Delta)}\). The
map \(\rho_{\varphi}\) extends smoothly up to the boundary and
\(\tilde{\rho}_{\varphi}(\overline{D}\setminus\varphi(\partial\Delta))\subseteq
\Delta\). From the definition of \(\varphi\), being it injective, it follows that the
fibers of \(\rho_{\varphi}\) are also intersections with \(D\) of complex affine
hyperplanes. We call \((\varphi,\rho_{\varphi},\tilde{\rho}_{\varphi})\)
the \textit{projection device} associated to the complex geodesic \(\varphi\).

One of the
consequences of Lempert's work is that the
Kobayashi distance \(k_{D}\) is smooth outside the diagonal of \(D\times D\).
Furthermore for any couple of points \(z\), \(w\) in \(\overline{D}\) there
exists just one complex geodesic \(\varphi\), modulo precomposition by
automorphisms of \(\Delta\), such that \(\{z,w\}\subseteq
\varphi(\overline{\Delta})\). For complete statements and proofs we refer to
\cite[Chapter ~2.6]{abatelibro}, \cite[Chapter ~5]{kobayashilibro2} and
\cite[Chapter ~4]{jarnickilibro} and, to the original works
\cite{lempert1}, \cite{lempert2}.

In studying global iteration theory in convex domains, M.
Abate introduced the concept of horosphere in the frame of invariant distances
and this notion proves to be very useful. Let \(D\) be as above, \(\tau\) a
point in \(\partial D\) and \(p\) a point in \(D\). The
horosphere of center \(\tau\), pole \(p\in D\) and radius \(R\) is defined as
the set \begin{equation*}E(\tau,p,R)=\{z\in D\mid \lim_{x\rightarrow
\tau}[k_{D}(z,x)-k_{D}(p,x)]«\dfrac{1}{2}\log R\}.
\end{equation*} It was proved in \cite{abateorosfere} that the limit exists,
and horospheres are convex subsets of \(D\) (see also
\cite[Theorem ~2.6.47]{abatelibro}). In the unit disc of
\(\mathbb{C}\) they are discs internally tangents to the boundary of the unit
disc in the center of the horosphere. In the unit ball of
\(\mathbb{C}^{n}\) they are ellipsoids internally tangents to the boundary of
the unit ball in the center of the horosphere.
Recently it has been showed
(\cite{trapani}, \cite[Section ~4]{bracci2} and \cite[Remark ~6.4]{bracci1})
that horospheres in bounded smooth strongly convex domains are smooth and
strongly convex (and thus strongly pseudoconvex), except at the center and have
a global defining function (\cite[Theorem ~6.3]{bracci1}). Precisely they are
sublevel sets of the pluricomplex Poisson kernel in \(D\) (see \cite{bracci2}).

The last tool we need is restricted limit. In many problems where the
central feature is the boundary behavior of holomorphic maps, they prove to be
very useful. We recall the basic definitions and proprieties, previously stated
in the introduction. Let \(D\),
\(\tau\) and \(p\) be as above. As we said \textit{conic region at} 
\(\tau\) is the set of points \begin{equation*}\{z\in D\mid \lVert
z-\tau\rVert\leq c\, \text{dist}(z,\partial D)\}\end{equation*} for some \(c »
1\), where \(\text{dist}(\cdot,\cdot)\) denotes the euclidean distance in
\(\mathbb{C}^{n}\). A \textit{Kor\'anyi region} centered at
\(\tau\), with pole \(p\) and amplitude \(M »1\) is the set
\begin{equation*}K(\tau,p,M)=\{z\in D\mid
\lim_{x\rightarrow\tau}[k_{D}(z,x)-k_{D}(p,x)] +k_{D}(z,p)«\log
M\}.\end{equation*}
Recall again that a \(\tau\)\textit{-curve} is a map \(\alpha :[0,1)\rightarrow
D\)
such that \(\lim_{t\rightarrow 1^{-}}\alpha (t)=\tau\). A
\textit{non-tangential}
\(\tau\)-curve is a \(\tau\)-curve that lies inside some conic region at
\(\tau\).
The prototype of a non-tangential \(\tau\)-curve is \(\varphi
(r)\) for
\(r\) in \([0,1)\), where \(\varphi\) is a complex geodesics such that
\(\varphi (1)=\tau\).

A
function \(F:D\rightarrow \mathbb{C}^{n}\) has \textit{non-tangential
limit} \(L\) at \(\tau\) and we
denote it by \begin{equation*}\angle\lim_{z\rightarrow
\tau}F(z)=L\end{equation*} if \(\lim_{t\rightarrow
1^{-}}F(\alpha (t))\) exists and is the same for any
non-tangential \(\tau\)-curve. 

We say that \(F:D\rightarrow \mathbb{C^{n}}\) has
\(K\text{-}\lim\) \(L\) and we write \begin{equation*}K\text{-}\lim
_{z\rightarrow \tau}F(z)=L\end{equation*} if \(\lim F(z)=L\) for \(z\rightarrow
\tau\)
whitin \(K(\tau,M)\), for any \(M
»1\).

The relation between these limits is the following: if \(K\text{-}\lim
_{z\rightarrow \tau}F(z)=L\)
then \(\angle\lim_{z\rightarrow
\tau}F(z)=L\). This implication is generally strict.

For all the latter definitions and statements we refer the reader to
\cite[Chapter
~2.7]{abatelibro}, \cite{abatederivateangolari1}
and \cite{abatederivateangolari2} for the general case and to \cite{rudin} for
the unit ball.

\section{Infinitesimal Generator}\label{sezione2}
We start this section with a definition that will simplify the following
statements.\begin{definition}Let \begin{equation*}\begin{split}
\textit{HolG}(D)=&\{F \in
\textit{Hol}(D,\mathbb{C}^{n}) \mid \text{F is the infinitesimal generator of a
one} \\
& \text{parameter semigroup of holomorphic self-maps of D}\}. \end{split}
\end{equation*}
\end{definition}

A recent paper (\cite{bracci3}) provides us with the following useful condition
under which a map \(F\in \text{\textit{Hol}}(D,\mathbb{C}^n)\) is in
\(\text{\textit{HolG}}(D)\), namely:
\begin{theorem}\textup{\cite[Theorem ~3.5]{bracci3}}\label{prop1} Let \(D\)
be a smooth strongly convex domain of \(\mathbb{C}^{n}\), \(F\in
\text{\textit{Hol}}(D,\mathbb{C}^n)\); then \(F \in
\text{HolG}(D)\) if and only if \(\forall z,w \in D\), \(z\neq w\), we have
\begin{equation*} (dk_{D})_{(z,w)}(F(z),F(w))\leq 0.\end{equation*}
\end{theorem}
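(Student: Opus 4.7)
The plan is a two-way argument: the forward direction differentiates the Schwarz--Pick contractivity of the semigroup maps at $t=0$, while the reverse direction deduces semicompleteness of $F$ from the infinitesimal $k_D$-nonexpansivity encoded by the differential inequality. The main work is in the reverse direction, specifically in excluding finite-time escape to $\partial D$.

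For the forward direction, suppose $F$ generates a semigroup $\Phi_t$. By the Schwarz--Pick property, $k_D(\Phi_t(z),\Phi_t(w))\leq k_D(z,w)$ for all $z,w\in D$ and $t\geq 0$. Fix $z\neq w$ in $D$; by continuity and uniqueness of integral curves, $\Phi_t(z)\neq\Phi_t(w)$ on some interval $[0,\epsilon)$. Lempert's theorem (recalled in Section~\ref{sezione1}) guarantees that $k_D$ is smooth off the diagonal of $D\times D$, so the function $g(t)=k_D(\Phi_t(z),\Phi_t(w))$ is smooth on $[0,\epsilon)$. Since $g$ is nonincreasing with $g(0)=k_D(z,w)$, the chain rule yields
\[
0\geq g'(0) = (dk_D)_{(z,w)}\bigl(F(z),F(w)\bigr).
\]

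For the reverse direction, assume the differential condition and let $\Phi_t$ denote the local flow of $F$, with maximal forward existence time $T^*(z)\in(0,+\infty]$. Applying the assumption along the curve $(\Phi_s(z),\Phi_s(w))$ in $D\times D$, wherever both orbits are defined, gives $\frac{d}{ds}k_D(\Phi_s(z),\Phi_s(w))\leq 0$ by the chain rule, so the local flow is $k_D$-nonexpansive on its domain. To conclude $F\in\textit{HolG}(D)$ it then suffices to show $T^*(z)=+\infty$ for every $z\in D$. Suppose instead $T^*(z_0)<+\infty$ for some $z_0$; then $\Phi_t(z_0)$ leaves every compact subset of $D$ as $t\to T^*(z_0)^-$, and by completeness of $k_D$ on a bounded strongly convex domain one has $k_D(\Phi_t(z_0),p)\to+\infty$ for each fixed $p\in D$. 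If there exists $w_0\in D$ with $T^*(w_0)>T^*(z_0)$, then $\Phi_t(w_0)$ stays in a compact subset of $D$ on $[0,T^*(z_0)]$, so $k_D(\Phi_t(w_0),w_0)$ is bounded, and $k_D$-nonexpansivity together with the triangle inequality forces $k_D(\Phi_t(z_0),w_0)\leq k_D(z_0,w_0)+k_D(\Phi_t(w_0),w_0)$ to stay bounded as well, a contradiction.

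The hard remaining case, which I expect to be the main obstacle, is the one in which $T^*(z)\equiv T^*<+\infty$ is uniform in $z$: then every orbit hits $\partial D$ at the same finite time, and $k_D$-nonexpansivity forces any two orbits to have bounded Kobayashi distance while both tending to the boundary, so by the horosphere theory of Abate they must converge to a common boundary point $\tau\in\partial D$. Ruling this out in finite time should rely on the finer geometry of strongly convex domains recalled in Section~\ref{sezione1}: the regularity of horospheres as sublevel sets of the pluricomplex Poisson kernel, combined with a Lempert projection device $(\varphi,\rho_\varphi,\widetilde{\rho}_\varphi)$ associated with a complex geodesic ending at $\tau$, should let one project $F$ to a holomorphic vector field on $\Delta$ whose induced flow inherits the nonexpansivity and whose Berkson--Porta representation \eqref{bp} precludes finite-time blow-up; pushing this back to $D$ contradicts the uniform blow-up at $T^*$.
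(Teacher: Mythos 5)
This theorem is not proved in the paper at all: it is imported verbatim as \cite[Theorem~3.5]{bracci3}, with only the remark that the original proof uses elementary potential theory. So there is no ``paper proof'' to match; your proposal must stand on its own, and it does not quite.

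Your forward direction is fine: $g(t)=k_D(\Phi_t(z),\Phi_t(w))$ is well defined and smooth near $t=0$ (each $\Phi_t$ is injective, so the orbits stay off the diagonal, where $k_D$ is smooth by Lempert), $g(t)\leq g(0)$ by Schwarz--Pick, and $g'(0)\leq 0$ is the stated inequality. The genuine gap is in the reverse direction: you explicitly leave the case of a \emph{uniform} finite escape time $T^*$ unproved, offering only a speculative program (``should rely on\ldots should let one\ldots''), with no argument that the Berkson--Porta representation of the projected field actually precludes finite-time blow-up, nor even a proof that the orbits converge to a single boundary point $\tau$ (escaping every compact set does not by itself give convergence). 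As written, semicompleteness --- which is the entire content of the reverse implication --- is not established.

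Moreover, the case split is unnecessary: the standard argument closes the gap in one stroke. From $k_D$-nonexpansivity of the local flow, $k_D(\Phi_t(z),\Phi_s(z))\leq k_D(\Phi_{t-s}(z),z)$ for $t\geq s$, and since $\lVert\Phi_h(z)-z\rVert\leq h\sup_K\lVert F\rVert$ on a compact neighborhood $K$ of $z$ while $k_D$ is dominated by a constant times the Euclidean distance on $K$, one gets $k_D(\Phi_t(z),\Phi_s(z))\leq C\lvert t-s\rvert$ for all $s,t«T^*(z)$ after chaining. If $T^*(z)«+\infty$ the orbit is then $k_D$-Cauchy; a bounded (strongly) convex domain is complete hyperbolic, so the orbit converges to an interior point and the local flow extends it past $T^*(z)$, a contradiction. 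This disposes of all cases, including the one you flagged as the main obstacle, without horospheres, projection devices, or Berkson--Porta.
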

This property of \(F\) says more or less that the flow generated goes inside
each ball in the Kobayashi distance. The proof uses a few very basic notions of
potential theory but can quite easily
be adapted to avoid them. 

A similar condition with the Kobayashi metric in
place of the Kobayashi distance was found in \cite{abategeneratore}. Both
of these are related to the notion of \(k_{D}\)-\textit{monotonicity} as
introduced, for instance, in \cite{shoikhetlibro2}.

The following couple of results are implicit in the proof of
\cite[Proposition ~4.5]{bracci3}. In order to make this paper self contained we
shall state and prove them here without using potential theoretic notions.
\begin{lemma}\label{lemma111} Let \(z\in D\), let \(B_{k_{D}}(z,R)\) be
a Kobayashi-ball of \(D\) centered in \(z\) and with radius \(R\), and
\(
w\in\partial B_{k_{D}}(z,R)\); furthermore let
\((\rho_{\varphi},\tilde{\rho}_{\varphi},\varphi)\) be a projection device such
that \(z,w\in\varphi(\Delta)\). Then \begin{equation*}
T^{\mathbb{C}}_{w} \partial B_{k_{D}}(z,R)=\ker (d\rho_{\varphi})_{w}.
\end{equation*} \end{lemma}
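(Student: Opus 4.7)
\medskip

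\noindent\textbf{Proof plan.} My plan is to show both spaces are complex subspaces of $T_wD$ of complex dimension $n-1$ with one contained in the other, so they coincide.

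First I would verify the dimension count. Since $\tilde\rho_\varphi\circ\varphi=\mathrm{id}_\Delta$, the differential $(d\tilde\rho_\varphi)_w$ is surjective onto $T_{\xi_w}\Delta$, where $\xi_w\in\Delta$ is the preimage of $w$ under $\varphi$; hence $\ker(d\tilde\rho_\varphi)_w$ has complex dimension $n-1$. Because $\varphi$ is an embedding (its derivative never vanishes, even at $\partial\Delta$), $\ker(d\rho_\varphi)_w=\ker(d\tilde\rho_\varphi)_w$, which by the hyperplane description of the fibers of $\tilde\rho_\varphi$ equals $\{v\in\mathbb{C}^n:\langle v,\tilde\varphi(\xi_w)\rangle=0\}$. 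On the other hand, by Lempert's theorem the function $u(x):=k_D(z,x)$ is smooth near $w$ with $du_w\neq 0$, so $\partial B_{k_D}(z,R)=\{u=R\}$ is a smooth real hypersurface near $w$ and $T^{\mathbb{C}}_w\partial B_{k_D}(z,R)$ has complex dimension $n-1$.

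Next I would prove the inclusion $\ker(d\rho_\varphi)_w\subseteq T^{\mathbb{C}}_w\partial B_{k_D}(z,R)$. Fix $v\in\ker(d\rho_\varphi)_w$. Since the fiber of $\tilde\rho_\varphi$ through $w$ is the intersection of $D$ with the complex affine hyperplane $H=\{x:\langle x-w,\tilde\varphi(\xi_w)\rangle=0\}$, the segment of affine line $\zeta\mapsto w+\zeta v$ lies in $H\cap D$ for $|\zeta|$ small, and hence $\tilde\rho_\varphi(w+\zeta v)=\xi_w$. Let $\xi_z\in\Delta$ satisfy $\varphi(\xi_z)=z$, so that $\tilde\rho_\varphi(z)=\xi_z$ and $\omega(\xi_z,\xi_w)=k_D(z,w)=R$ by the isometric property of $\varphi$. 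The Schwarz--Pick contraction property applied to the holomorphic map $\tilde\rho_\varphi\colon D\to\Delta$ yields
\begin{equation*}
k_D(z,w+\zeta v)\;\ge\;\omega\bigl(\tilde\rho_\varphi(z),\tilde\rho_\varphi(w+\zeta v)\bigr)\;=\;\omega(\xi_z,\xi_w)\;=\;R,
\end{equation*}
with equality at $\zeta=0$. Hence $f(\zeta):=u(w+\zeta v)$ attains a local minimum at $0$.

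Finally I would extract the complex tangency from this minimum. Writing $\zeta=s+it$ and using that $u$ is smooth near $w$, the conditions $\partial_sf(0)=\partial_tf(0)=0$ translate to $du_w(v)=0$ and $du_w(iv)=0$, which is exactly the definition of membership in $T^{\mathbb{C}}_w\partial B_{k_D}(z,R)$. Combined with the equality of dimensions, this gives the desired identity. The main subtle point is invoking the smoothness of $k_D$ off the diagonal, which rests on Lempert theory; once that is available, the argument is an application of Schwarz--Pick to $\tilde\rho_\varphi$ together with the first-order minimality condition.
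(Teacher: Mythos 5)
Your proof is correct, and it shares the paper's central geometric fact --- that the fiber of the projection through \(w\) cannot meet the open Kobayashi ball, which both arguments extract from the distance-decreasing property of the holomorphic projection (you apply Schwarz--Pick to \(\tilde\rho_\varphi\) to get \(k_D(z,y)\ge\omega(\xi_z,\xi_w)=R\) on the fiber; the paper equivalently notes that \(\rho_\varphi\) contracts \(k_D\), so \(\rho_\varphi(B_{k_D}(z,R))\subseteq B_{k_D}(z,R)\cap\varphi(\Delta)\)). Where you genuinely diverge is in the concluding step. The paper invokes the convexity and boundary smoothness of Kobayashi balls: the fiber is a complex affine hyperplane through \(w\) disjoint from the interior of a smooth convex body, hence it must sit inside the unique real supporting hyperplane at \(w\) and therefore coincide with the complex tangent hyperplane. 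You instead run a first-derivative test: \(\zeta\mapsto k_D(z,w+\zeta v)\) has a local minimum at \(0\), so \(du_w(v)=du_w(iv)=0\), giving the inclusion \(\ker(d\rho_\varphi)_w\subseteq T^{\mathbb C}_w\partial B_{k_D}(z,R)\), and you finish with a dimension count using \(\tilde\rho_\varphi\circ\varphi=\mathrm{id}_\Delta\). Your route buys something: it does not need the (nontrivial) convexity of Kobayashi balls, only the smoothness of \(k_D\) off the diagonal and the nonvanishing of \(du_w\) (which you should justify in one line, e.g.\ \(u\) is strictly increasing with nonzero derivative along the geodesic \(\varphi\) through \(z\) and \(w\)); the paper's route avoids the explicit dimension bookkeeping by leaning on the uniqueness of tangent hyperplanes for smooth convex bodies. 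Both are sound; yours is the more elementary and more local of the two.
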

\begin{proof} Observe that
\(d\rho_{\varphi}=\partial\rho_{\varphi}\) because \( \rho_{\varphi} \) is
holomorphic.
The fibers of \(\rho_{\varphi}\) are intersection with \(D\) of complex affine
hyperplanes and we have
\(\ker
(d\rho_{\varphi})_{w}=\ker(\partial\rho_{\varphi})_{w}=\{y-w \mid y\in
\rho_{\varphi}^{-1}(w)\}\). Now
\(\rho_{\varphi}(z)=z\), \(\rho_{\varphi}(w)=w\) because \(\rho_{\varphi}\) is a
retraction on \(\varphi(\Delta)\); then
\(\rho_{\varphi}(B_{k_{D}}(z,R))\subseteq
B_{k_{D}}(z,R)\) because \(\rho_{\varphi}\) is holomorphic and contracts the
Kobayashi distance. So we have \( \rho_{\varphi}(B_{k_{D}}(z,R)) \subseteq
B_{k_{D}}(z,R)\cap \varphi(\Delta) \), thus no point
\(y\in\rho_{\varphi}^{-1}(w)\) can lie inside \( B_{k_{D}}(z,R)\). We know that
the
balls in the Kobayashi distance in \(D\) are
convex subsets with smooth boundary, so each point in the boundary has a unique
real
tangent hyperplane and also a unique complex tangent hyperplane. Then the
complex hyperplane \(\{y\in \mathbb{C}^{n} \mid \rho_{\varphi}(y)=w\}\) is the
complex tangent hyperplane at \(\partial B_{k_{D}}(z,R)\) in \(w\) and thus
\begin{equation*}
T^{\mathbb{C}}_{w} \partial B_{k_{D}}(z,R)=\ker (d\rho_{\varphi})_{w}.
\end{equation*}
\end{proof}

\begin{lemma}\label{lemma1a} For any complex geodesic \(\varphi:\Delta
\rightarrow D\), with associated
\((\rho_{\varphi},\tilde{\rho}_{\varphi},\varphi)\) projection device and for
any \(z=\varphi(\xi)\in \varphi(\Delta)\), \(w=\varphi(\eta) \in \varphi
(\Delta)\), with \(z\neq w\) and vectors \(u \in T^{\mathbb{C}}_{z}(D)\),
\(v \in T^{\mathbb{C}}_{w}(D)\) we have \begin{equation*}
(dk_{D})_{(z,w)}(u,v)=(dk_{\Delta})_{(\xi,\eta)}((d\tilde{\rho}_{\varphi})_{z}
(u) , (d\tilde{\rho}_{\varphi})_{w}(v)).
\end{equation*}
\end{lemma}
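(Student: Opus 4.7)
The plan is to reduce the claim to the statement that a particular non-negative function on $D\times D$ attains a minimum at $(z,w)$, so its differential vanishes there.

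First I would introduce the auxiliary function
\[
f(z',w') \;=\; k_{D}(z',w') \;-\; k_{\Delta}\bigl(\tilde\rho_{\varphi}(z'),\tilde\rho_{\varphi}(w')\bigr).
\]
Since $\tilde\rho_{\varphi}\colon D\to\Delta$ is holomorphic, the contracting property of the Kobayashi (pseudo)distance under holomorphic maps gives $f\geq 0$ throughout $D\times D$. At the specific point $(z,w)=(\varphi(\xi),\varphi(\eta))$, two facts conspire to make $f$ vanish: the relation $\tilde\rho_{\varphi}\circ\varphi\equiv\mathrm{id}_{\Delta}$ implies $\tilde\rho_{\varphi}(z)=\xi$ and $\tilde\rho_{\varphi}(w)=\eta$, while the isometric character of the complex geodesic $\varphi$ gives $k_{D}(\varphi(\xi),\varphi(\eta))=k_{\Delta}(\xi,\eta)$. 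Hence $f(z,w)=0$, so $(z,w)$ is a global minimum of $f$.

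Next I would invoke regularity. The hypothesis $z\neq w$ together with the injectivity of $\varphi$ yields $\xi\neq\eta$, so both $k_{D}$ and $k_{\Delta}$ are being evaluated away from their respective diagonals; by Lempert's theorem (quoted in Section~\ref{sezione1}) $k_{D}$ is smooth there, and $k_{\Delta}$ is real-analytic off the diagonal of $\Delta\times\Delta$. Since $\tilde\rho_{\varphi}$ is holomorphic, the function $f$ is smooth in a neighbourhood of $(z,w)$. A smooth non-negative function vanishing at a point has zero differential there, so $df_{(z,w)}=0$. Writing this out via the chain rule on the second summand yields
\[
(dk_{D})_{(z,w)}(u,v)\;=\;(dk_{\Delta})_{(\xi,\eta)}\bigl((d\tilde\rho_{\varphi})_{z}(u),(d\tilde\rho_{\varphi})_{w}(v)\bigr)
\]
for every pair of real tangent vectors, and in particular for $u\in T_{z}^{\mathbb{C}}D$, $v\in T_{w}^{\mathbb{C}}D$ regarded as real tangent vectors in the usual way.

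The only genuine obstacle I anticipate is the smoothness issue, which is already handled by Lempert; apart from that, the argument is a direct application of the two defining properties of the projection device (the identity $\tilde\rho_{\varphi}\circ\varphi=\mathrm{id}$ and the isometric character of $\varphi$) combined with the contraction principle for $k_{D}$. Everything else is the first-order criterion for an interior minimum of a smooth function.
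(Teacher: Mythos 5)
Your proof is correct, and it takes a genuinely different route from the paper's. The paper works directly on the tangent spaces: it first identifies \(\ker (d\rho_{\varphi})_{w}\) with the complex tangent space to the Kobayashi sphere \(\partial B_{k_{D}}(z,R)\), \(R=k_{D}(z,w)\), and hence with \(\ker d_{y}(k_{D}(x,y))_{(z,w)}\) (Lemma \ref{lemma111}); it then uses the splitting \(T_{w}^{\mathbb{C}}(D)=(d\rho_{\varphi})_{w}(T_{w}^{\mathbb{C}}(D))\oplus\ker(d\rho_{\varphi})_{w}\) to replace \(u\), \(v\) by \((d\rho_{\varphi})_{z}(u)\), \((d\rho_{\varphi})_{w}(v)\) without changing the value of \((dk_{D})_{(z,w)}\), and finally transports the computation to \(\Delta\) by differentiating the isometry identity \(k_{D}(\varphi(\delta),\varphi(\theta))=k_{\Delta}(\delta,\theta)\). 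You instead run a variational argument: the function \(f=k_{D}-k_{\Delta}\circ(\tilde{\rho}_{\varphi}\times\tilde{\rho}_{\varphi})\) is nonnegative by the contraction property of the Kobayashi distance under the holomorphic map \(\tilde{\rho}_{\varphi}\), vanishes at \((z,w)\) because \(\varphi\) is an isometry and \(\tilde{\rho}_{\varphi}\circ\varphi=\mathrm{id}_{\Delta}\), and is smooth near \((z,w)\) since \(z\neq w\) forces \(\xi\neq\eta\); the first-order condition for an interior minimum then gives \(df_{(z,w)}=0\), which is exactly the claimed identity after applying the chain rule. Your route is shorter and bypasses Lemma \ref{lemma111} and the geometry of Kobayashi balls entirely, needing only contraction, the two defining properties of the projection device, and Lempert's smoothness of \(k_{D}\) off the diagonal. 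What the paper's longer route buys is an explicit description of the directions annihilated by \((dk_{D})_{(z,w)}\) (the complex tangent spaces to the Kobayashi spheres), and that same kernel computation is recycled almost verbatim for horospheres in Lemma \ref{lemma2}, so the extra machinery earns its keep later in the paper.
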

\begin{proof} First of all, from the fact that
\(\varphi\) is an isometry, we have,
\(k_{D}(\varphi(\delta),\varphi(\theta))=k_{\Delta}(\delta,\theta)\), for all \(
\delta, \theta \in \Delta\), and hence
\begin{equation*}\begin{split} d(k_{D})_{(\varphi(\xi),
\varphi(\eta))}[(d\varphi)_ { \xi
}(\zeta),(d\varphi)_{\eta}(\sigma)]& =d_{x}(k_{D}(x,y))_{(\varphi(\xi),
\varphi(\eta)) }
[(d\varphi)_ { \xi }(\zeta),0] \\
& \quad + d_ { y }(k_ { D }(x ,
y))_{(\varphi(\xi),\varphi(\eta))}[0,(d\varphi)_{\eta}(\sigma)]\\
& =
(dk_{\Delta})_{(\xi,\eta)}(\zeta,\sigma)\end{split}\end{equation*}for any
\(\zeta\) in
\(T_{\xi}(\mathbb{C})\cong\mathbb{C}\) and \(\sigma\)
in \(T_{\eta}(\mathbb{C})\cong\mathbb{C}\).
We claim that
\begin{equation}\label{eq1a}
d_{x}(k_{D}(x,y))_{(z,w)}(u,0)=d_{x}(k_{D}(x,y))_{(z,w)}[(d\rho_{
\varphi})_{z}(u),0]
\end{equation} and\begin{equation}\label{eq1b}
d_{y}(k_{D}(x,y))_{(z,w)}(0,v)=d_{y}(k_{D}(x,y))_{(z,w)}[0,(d\rho_{
\varphi})_{w}(v)],
\end{equation} for any \((u,v)\) in \(T_{z}^{\mathbb{C}}(D)\times
T_{w}^{\mathbb{C}}(D)\). We shall prove only (\ref{eq1b}), the other proof is
similar. Since \(\rho_{\varphi}\) is a
holomorphic retraction on \(\varphi(\Delta)\), we have
\(d\rho_{\varphi}=\partial\rho_{\varphi}\) and \((d\rho_{\varphi}) _{w}\circ
(d\rho_{\varphi})_{w}= (d\rho_{\varphi})_{w}\); thus \(d\rho_{\varphi}\) is a
linear projection in \(T_{w}^{\mathbb{C}}(D)\) and we have a holomorphic
splitting \begin{equation*}\label{eq2a}
T_{w}^{\mathbb{C}}(D)=d\rho_{\varphi}(T_{w}^{\mathbb{C}}(D)) \oplus \ker
(d\rho_{\varphi})_{w}.
\end{equation*}Using the previous lemma we have
\begin{equation} \ker (d\rho_{\varphi})_{w}= T^{\mathbb{C}}_{w} \partial
B_{k_{D}}(z,R)=\ker d_{y}(k_{D}(x,y))_{(z,w)} \end{equation}
where \(R=k_{D}(z,w)\). This concludes the proof of the claim.
Hence we have \begin{equation*} \begin{split}
(dk_{D})_{(z,w)}(u,v)& = d_{x}(k_{D}(x,y))_{(z,w)}(u,0)\\
& \quad +d_{y}
(k_{D}(x,
y))_{(z,w)}(0,v)\\
& = d_{x}(k_{D}(x,y))_{(z,w)}[(d\rho_{
\varphi})_{z}(u),0]\\
& \quad +d_{y}(k_{D}(x,y))_{(z,w)}[0,(d\rho_{
\varphi})_{w}(v)]\\
& = d_{x}(k_{D}(x,y))_{(z,w)}d\varphi_{\xi}[(d\tilde{\rho}_{
\varphi})_{z}(u),]\\
& \quad +d_{y}(k_{D}(x,y))_{(z,w)}[0,d\varphi_{\eta}(d\tilde{\rho}_
{\varphi})_{w}(v)]\\
& = (dk_{\Delta})_{(\xi,\eta)}((d\tilde{\rho}_{
\varphi})_{z}
(u) , (d\tilde{\rho}_{\varphi})_{w}(v)).
\end{split}
\end{equation*}
\end{proof}

We are ready for the main theorem of this section: it provides us with a link
between an infinitesimal generator in \(D\) and its projections in the
images of complex geodesics.
It was proved in \cite[Proposition ~4.5]{bracci3}, but here we give a proof
independent of potential theoretic notions, along the same lines.
\begin{theorem}\label{teo2} Let \(D\) be a bounded, smooth, strongly
convex domain in \(\mathbb{C}^{n}\) and \(F:D\rightarrow\mathbb{C}^{n}\) a
holomorphic map. Then
\(F \in \textup{\textit{HolG}}(D)\) if and only if for any complex geodesics
\(\varphi:\Delta\rightarrow D\) the vector field on \(\Delta\)
given by \((d\tilde{\rho}_{\varphi})_{\xi}(F(\varphi(\xi)))\) is in
\(\text{HolG}(\Delta)\).
\end{theorem}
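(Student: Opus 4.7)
The plan is to prove the equivalence by applying Theorem \ref{prop1} to both $D$ and $\Delta$, and then using Lemma \ref{lemma1a} to identify the two characterizing inequalities on complex geodesic pairs. The characterization reads: $F \in \textup{\textit{HolG}}(D)$ iff $(dk_D)_{(z,w)}(F(z),F(w))\leq 0$ for all distinct $z,w\in D$, and similarly $g\in \textup{HolG}(\Delta)$ iff $(dk_\Delta)_{(\xi,\eta)}(g(\xi),g(\eta))\leq 0$ for all distinct $\xi,\eta\in\Delta$. So the whole theorem reduces to matching these two inequalities along the images of complex geodesics.

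For the forward direction, assume $F\in\textup{\textit{HolG}}(D)$, fix a complex geodesic $\varphi$, and set $G(\xi):=(d\tilde{\rho}_{\varphi})_{\varphi(\xi)}(F(\varphi(\xi)))$. This $G$ is holomorphic on $\Delta$ as a composition of holomorphic maps. For any distinct $\xi,\eta\in\Delta$, set $z=\varphi(\xi)$, $w=\varphi(\eta)$; applying Lemma \ref{lemma1a} with $u=F(z)$, $v=F(w)$ gives
\begin{equation*}
(dk_\Delta)_{(\xi,\eta)}(G(\xi),G(\eta))=(dk_D)_{(z,w)}(F(z),F(w))\leq 0,
\end{equation*}
so $G\in\textup{HolG}(\Delta)$ by Theorem \ref{prop1} applied to $\Delta$.

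For the converse, assume the projected field on $\Delta$ belongs to $\textup{HolG}(\Delta)$ for every complex geodesic $\varphi$. Pick any distinct $z,w\in D$. By Lempert's theorem there exists a complex geodesic $\varphi:\Delta\to D$ whose image contains both points, say $z=\varphi(\xi)$ and $w=\varphi(\eta)$. Using Lemma \ref{lemma1a} in the same way and the hypothesis applied to this particular $\varphi$, we get
\begin{equation*}
(dk_D)_{(z,w)}(F(z),F(w))=(dk_\Delta)_{(\xi,\eta)}(G(\xi),G(\eta))\leq 0,
\end{equation*}
and since $z,w$ were arbitrary, Theorem \ref{prop1} gives $F\in\textup{\textit{HolG}}(D)$.

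The argument is essentially a bookkeeping exercise once Lemma \ref{lemma1a} and Theorem \ref{prop1} are available; there is no real obstacle, but the one point worth double-checking is that the vector field $G$ obtained by projecting is well-defined and holomorphic on all of $\Delta$. This follows because $\tilde{\rho}_\varphi$ is a globally defined holomorphic map $D\to\Delta$ with $\tilde{\rho}_\varphi\circ\varphi=\textup{id}_\Delta$, so the composition $\xi\mapsto (d\tilde{\rho}_\varphi)_{\varphi(\xi)}(F(\varphi(\xi)))$ is clearly holomorphic. The existence step in the converse direction leans crucially on the Lempert-type result that any two points in $D$ lie on a common complex geodesic, recalled in Section \ref{sezione1}.
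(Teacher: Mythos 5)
Your proposal is correct and follows essentially the same route as the paper: both directions rest on the characterization of Theorem \ref{prop1} applied in $D$ and in $\Delta$, the transfer identity of Lemma \ref{lemma1a} along complex geodesics, and Lempert's existence of a complex geodesic through any two points of $D$ for the converse. The only difference is presentational — you spell out the two directions separately and note the holomorphy of the projected field, which the paper leaves implicit.
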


\begin{proof} Consider a complex geodesic \(\varphi\) and set
\(f_{\varphi}(\xi):=(d\tilde{\rho}_{\varphi})_{\varphi(\xi)} (F(\varphi(\xi))\).
So we have to prove that \(F\in \text{\textit{HolG}}(D)\) if and only if
\(f_{\varphi}\in
\text{\textit{HolG}}(\Delta)\) for all complex geodesics \(\varphi\). From the
previous
lemma it follows that \begin{multline*}
(dk_{D})_{(\varphi(\xi),\varphi(\eta))}(F(\varphi(\xi)),F(\varphi(\eta)))\\
\begin{aligned}& =(dk_{\Delta })_{ (\xi ,
\eta)}((d\tilde{\rho}_{\varphi})_{\varphi(\xi)}(F(\varphi(\xi))),(d\tilde{\rho}_
{\varphi})_{\varphi(\eta)}(F(\varphi(\eta))))\\
&
=(dk_{\Delta})_{(\xi,\eta)}(f_{\varphi}(\xi),f_{\varphi}(\eta)),\end{aligned}
\end{multline*} for any \(\xi\), \(\eta\) in \(\Delta\). Now on one hand, if
\(\varphi\) is any
complex geodesics and \(F\in \text{\textit{HolG}}(D)\) then \(f_{\varphi}\in
\text{\textit{HolG}}(\Delta)\) by by Theorem \ref{prop1}. On the other hand for
any \(z,w\) in \(D\) exists a unique complex geodesics \(\varphi
:\Delta \rightarrow D \) such that \(\varphi(\xi)=z\) and
\(\varphi(\eta)=w\) for some \(\xi, \eta \in \Delta \). Thus we have the
desired conclusion
\end{proof}

\section{Rigidity Condition}\label{sezione3}
In this section we are going to prove the main theorem of our paper.
Before going further we need a preparatory lemma. In \cite[Theorem
~6.3]{bracci1} it is proved that horospheres of radius \(R\) and fixed
center \(p\) are \(R\)-sublevel set of the global smooth function
\(\Omega_{\tau,p}\), which is the pluricomplex Poisson kernel defined on
\(D\). Here we do not need any particular property of \(\Omega_{\tau,p}\) other
than its smoothness and strict pseudo-convexity of its sublevel sets, and the
following
\begin{lemma}\label{lemma2}Let \(\tau \in \partial D \), \(p \in
D\) and let \(\Omega_{\tau,p} :D \rightarrow (0,+\infty)\) be the smooth convex
function, defined above, whose sublevel sets are \(\{x\in D
\mid
\Omega_{\tau,p}(x)« r\}=E(\tau,p,R)\), the
horospheres of center \(\tau\) and pole \(p\), for \(r » 0 \). Furthermore let
\(z\) be any point in \(\partial E(\tau,p,R)\) and let \(\varphi\) be a complex
geodesic
such that \(z,\tau \in
\overline{\varphi(\Delta)}\) and let
\((\varphi,\rho_{\varphi},\tilde{\rho}_{\varphi})\) be its projection device.
Then 
\begin{equation}\label{eq4}
\ker (d\rho_{\varphi})_{z}=T^{\mathbb{C}}_{z}\partial E(\tau,p,R)=\ker
(\partial \Omega_{\tau,p})_{z}
\end{equation}
where \(R=\Omega_{\tau,p}(z)\).\end{lemma}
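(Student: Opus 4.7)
My plan is to mirror the proof of Lemma \ref{lemma111} for Kobayashi balls, replacing the Kobayashi ball $B_{k_D}(z,R)$ by the horosphere $E(\tau,p,R)$. The two equalities to prove are somewhat independent: the first is a geometric/rigidity statement about the fiber of $\rho_\varphi$, while the second is the standard identification of the complex tangent hyperplane of a smooth level set of a real-valued function.

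First I would establish the horosphere invariance $\rho_\varphi(E(\tau,p,R))\subseteq E(\tau,p,R)$. After reparametrizing $\varphi$ so that $\varphi(1)=\tau$, I would take any sequence $\xi_n\to 1$ in $\Delta$ and set $w_n:=\varphi(\xi_n)\to\tau$. Since $\rho_\varphi$ is a holomorphic retraction onto $\varphi(\Delta)$, we have $\rho_\varphi(w_n)=w_n$, so for any $y\in D$ the Kobayashi contraction property gives
$$k_D(\rho_\varphi(y),w_n)=k_D(\rho_\varphi(y),\rho_\varphi(w_n))\leq k_D(y,w_n).$$
Subtracting $k_D(p,w_n)$ from both sides and invoking Abate's existence theorem for the horosphere-defining limit (so that the limit along $w_n$ coincides with the general $\lim_{w\to\tau}$), I would obtain
$$\lim_{w\to\tau}[k_D(\rho_\varphi(y),w)-k_D(p,w)]\leq\lim_{w\to\tau}[k_D(y,w)-k_D(p,w)],$$
which, via $\Omega_{\tau,p}$, gives exactly the $\rho_\varphi$-invariance of each open horosphere.

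Next I would pass to the tangent identification. If some $y\in\rho_\varphi^{-1}(z)$ lay in the open horosphere $E(\tau,p,R)$, the previous step would force $z=\rho_\varphi(y)\in E(\tau,p,R)$, contradicting $z\in\partial E(\tau,p,R)$. Hence the fiber $\rho_\varphi^{-1}(z)$, which is the intersection of $D$ with a complex affine hyperplane through $z$, misses the interior of $E(\tau,p,R)$. Because $\partial E(\tau,p,R)$ is smooth and strongly (pseudo)convex at $z\neq\tau$ (quoted in the paragraph preceding the lemma), it admits a unique complex supporting hyperplane at $z$, namely $T_z^{\mathbb{C}}\partial E(\tau,p,R)$; this forces $\ker(d\rho_\varphi)_z=T_z^{\mathbb{C}}\partial E(\tau,p,R)$. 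For the second equality, $\Omega_{\tau,p}$ is a smooth real-valued defining function with $\{\Omega_{\tau,p}=R\}=\partial E(\tau,p,R)$, so by the general fact that the complex tangent space to a smooth real hypersurface at a regular point equals the kernel of $\partial$ of any real defining function, one has $T_z^{\mathbb{C}}\partial E(\tau,p,R)=\ker(\partial\Omega_{\tau,p})_z$.

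The main delicate point is the limit interchange in the first step: upgrading the inequality along the specific sequence $w_n=\varphi(\xi_n)$ to the inequality between the horosphere-defining limits themselves requires knowing in advance that those limits exist and are approach-independent, which is precisely Abate's horosphere theorem cited in Section \ref{sezione1}. Granted that input, the rest is a straightforward convex separation argument parallel to the proof of Lemma \ref{lemma111}, with smoothness and strong pseudoconvexity of $\partial E(\tau,p,R)$ playing the role that smoothness and convexity of $\partial B_{k_D}(z,R)$ played there.
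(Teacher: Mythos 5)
Your proposal is correct and follows essentially the same route as the paper: you establish the $\rho_\varphi$-invariance of horospheres via the Kobayashi contraction property and the existence of the horosphere limit (computed along $\varphi(\Delta)$), deduce that the fiber $\rho_\varphi^{-1}(z)$ misses the open horosphere, and conclude by uniqueness of the complex supporting hyperplane at a smooth strongly convex boundary point. The paper's proof is the same argument, merely phrased with the restricted limit $\lim_{\varphi(\Delta)\ni y\to\tau}$ in place of your sequence $w_n=\varphi(\xi_n)$.
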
 \begin{proof}

We have
\begin{equation*}\begin{split}
\lim_{y \rightarrow \tau}[k_{D}(\rho_{\varphi}(w),y)-k_{D}(p,y)]& =
\lim_{\varphi(\Delta) \ni y \rightarrow
\tau}[k_{D}(\rho_{\varphi}(w),y)-k_{D}(p,y)]\\
&\leq \lim_{\varphi(\Delta) \ni y \rightarrow
\tau}[k_{D}(w,y)-k_{D}(p,y)]\\ 
& =\lim_{y \rightarrow
\tau}[k_{D}(w,y)-k_{D}(p,y)],\end{split}\end{equation*} for any \(w\) in
\(D\). Thus observing that \(\rho_{\varphi}\circ \iota_{\varphi(\Delta)}=
id_{\varphi(\Delta)}\), we have \(\rho_{\varphi}(E(\tau,p,R)) =
E(\tau,p,R) \cap \varphi(\Delta)\) for every \(R»0\). Hence no point of the
fiber \(\{w\in D\mid \rho_{\varphi}(w)=z\}\) can lie inside \(E(\tau,p,R)\).
Since \(\rho_{\varphi} \) is holomorphic we have
\(d\rho_{\varphi}=\partial\rho_{\varphi}\). The fibers of \(\rho_{\varphi}\) are
intersections with \(D\) of complex affine
hyperplanes and we have
\begin{equation*}\ker
(d\rho_{\varphi})_{z}=\ker(\partial\rho_{\varphi})_{z}=\{w-z \mid w\in
\rho_{\varphi}^{-1}(z)\}.\end{equation*}
As we remarked in Section \ref{sezione1} (see also \cite{bracci1},
\cite{bracci2}),
horospheres in a bounded smooth strongly convex domain \(D\) are bounded smooth
convex subdomain of \(D\); so each point in the boundary of a horosphere has a
unique real
tangent hyperplane and also a unique complex tangent hyperplane. Hence the
complex hyperplane \(\{w\in \mathbb{C}^{n} \mid \rho_{\varphi}(w)=z\}\) is the
complex tangent hyperplane at \(\partial E(\tau,p,R)\) in \(z\) and
thus we can conclude that \begin{equation*}
\ker (d\rho_{\varphi})_{z}=T^{\mathbb{C}}_{z}\partial E(\tau,p,R)=\ker
(\partial \Omega_{\tau,p})_{z}.\end{equation*}
\end{proof}
We can now state and prove the following;\begin{theorem}\label{theo3} Let \(F\in
\text{HolG}(D)\), \(\tau \in \partial D\); furthermore for each projection
device
\((\varphi,\rho_{\varphi},\tilde{\rho}_{\varphi})\), let
\(f_{\varphi}(\xi):=(d\tilde{\rho}_{\varphi})_{\varphi(\xi)} (F(\varphi
(\xi))\). Then \(F\equiv 0 \) in \(D\) if and only if \( f_{\varphi}\equiv 0 \)
in \(\Delta\) for any complex
geodesic \(\varphi\) such that \(\tau \in \varphi(
\partial\Delta)\).
\end{theorem}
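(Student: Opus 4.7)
The direction $F\equiv 0\Rightarrow f_{\varphi}\equiv 0$ is immediate from the definition of $f_\varphi$. For the converse, my plan is to argue pointwise: fix $z\in D$ and show $F(z)=0$. By the Lempert theory recalled in Section \ref{sezione1}, there is a unique (up to reparametrization) complex geodesic $\varphi$ with $z\in\varphi(\Delta)$ and $\tau\in\varphi(\partial\Delta)$. Writing $z=\varphi(\xi_0)$, the hypothesis gives $(d\tilde{\rho}_{\varphi})_z(F(z))=f_{\varphi}(\xi_0)=0$. Now fix any pole $p\in D$, set $\Omega:=\Omega_{\tau,p}$, and note that $z\in\partial E(\tau,p,\Omega(z))$: Lemma \ref{lemma2} then identifies $\ker(d\tilde{\rho}_{\varphi})_z=\ker(d\rho_{\varphi})_z=\ker(\partial\Omega)_z$, hence $F(z)\in\ker(\partial\Omega)_z$. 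Since $z\in D$ was arbitrary, the smooth function
\begin{equation*}
g(z):=\sum_{i=1}^{n} F_i(z)\,\frac{\partial\Omega}{\partial z_i}(z)
\end{equation*}
vanishes identically on $D$.

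The core computation is then to differentiate $g\equiv 0$ with respect to $\bar{z}_j$. Because $F$ is holomorphic, $\partial F_i/\partial\bar{z}_j\equiv 0$, so the terms involving derivatives of $F$ drop and one obtains
\begin{equation*}
\sum_{i=1}^{n} F_i(z)\,L_{ij}(z)=0 \qquad\text{for every } j=1,\dots,n,
\end{equation*}
where $L_{ij}(z):=\partial^2\Omega(z)/(\partial z_i\,\partial\bar{z}_j)$ is the Levi matrix of $\Omega$. Multiplying by $\bar{F}_j(z)$ and summing over $j$ yields
\begin{equation*}
L_z(F(z),F(z)):=\sum_{i,j}L_{ij}(z)F_i(z)\bar{F}_j(z)=0,
\end{equation*}
where $L_z$ denotes the Levi (Hermitian) form of $\Omega$ at $z$.

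To conclude, horospheres are smooth and strictly pseudoconvex (as recalled in Section \ref{sezione1} from \cite{bracci1,bracci2,trapani}), so the Levi form $L_z$ is positive definite on the complex tangent space $\ker(\partial\Omega)_z$ to $\partial E(\tau,p,\Omega(z))$ at $z$. Since $F(z)\in\ker(\partial\Omega)_z$ and $L_z(F(z),F(z))=0$, positive definiteness forces $F(z)=0$; as $z$ was arbitrary, $F\equiv 0$ on $D$. The decisive ingredient is Lemma \ref{lemma2}, which converts the one-dimensional vanishing $f_\varphi\equiv 0$ into the pointwise identity $\partial\Omega(F)\equiv 0$ on $D$; once this bridge is in place, holomorphicity of $F$ collapses the $\bar\partial$-derivative into a Levi-form identity, and strict pseudoconvexity of horospheres does the rest. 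The subtlety I would be most careful about is bookkeeping in the last step: both the vector $F(z)$ and the subspace on which $L_z$ is known to be positive definite must be the same hyperplane $\ker(\partial\Omega)_z$, which is exactly what Lemma \ref{lemma2} guarantees.
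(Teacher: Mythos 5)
Your proposal is correct and follows essentially the same route as the paper: use Lemma \ref{lemma2} to show $F(z)\in\ker(\partial\Omega_{\tau,p})_z$ for every $z$, differentiate the resulting identity with $\bar\partial$ using holomorphy of $F$ to get a Levi-form identity, and conclude by strict pseudoconvexity of the horospheres. Your version merely makes the index bookkeeping (the contraction $\sum_i F_i L_{ij}=0$ and the subsequent pairing with $\bar F_j$) more explicit than the paper does.
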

\begin{proof} The (\(\Rightarrow\)) direction is trivial, so suppose that
\(f_{\varphi}\equiv 0 \) in \(\Delta\) for every complex
geodesic \(\varphi\) such that \(\tau \in \varphi(\partial\Delta)\). Let
\(z\in D\) , \(p\in D\); let \(E(\tau,p,R)\) a horosphere such that
\(z\in \partial E(\tau,p,R)\) and \(\varphi\)
be
a complex geodesics such that
\(z=\varphi(\xi)\) for some \(\xi\in \Delta\) and \(\tau \in
\varphi(\partial\Delta)\). Thus we have
\begin{equation*}
(d\rho_{\varphi})_{z}(F(z))=(d\varphi)_{\xi}(d\tilde{\rho}_{\varphi})_{
\varphi(\xi) } (F(\varphi(\xi))=(d\varphi)_{\xi} f_{\varphi}(\xi)=0
\end{equation*} Thus from the identity (\ref{eq4}) it follows
that \((\partial\Omega_{\tau,p})_{z}F(z)=0\). Deriving with respect to
\(\overline{\partial}\) we have
\((\partial\overline{\partial}\Omega_{\tau,p})_{z}F(z)=0\) because
\(\overline{\partial}_{z}F(z)=0\) being \(F\) holomorphic. Now from the
previous Lemma \ref{lemma2}, we know that \(F(z)\in
T^{\mathbb{C}}_{z}\partial
E(\tau,p,R)\) where \(R=\Omega_{\tau,p}(z)\). By \cite[Remark
~6.4]{bracci1} boundaries of horospheres are
strictly pseudoconvex; so the
Levi form \(\partial\overline{\partial}\Omega_{\tau,p}\) restricted to
their complex tangent spaces has to be positive definite. So we must have
\(F(z)=0\). The arbitrariness of \(z \in D\) implies \(F\equiv 0 \) in \(D\) and
this concludes the proof.
\end{proof}

Now as an application of the previous result we can prove;
\begin{theorem}\label{theo4} Let
\(D\subset\subset\mathbb{C}^{n}\) be smooth, strongly
convex, \(\tau\in\partial D\), \(F\in \text{HolG}(D)\). If\begin{equation}\label{eq17}\angle\lim_{z\rightarrow
\tau}\dfrac{F(z)}{\lVert z-\tau
\rVert ^{3}}=0.\end{equation}
then \(F\equiv 0\) in \(D\).\end{theorem}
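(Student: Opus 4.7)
The plan is to reduce to the one-dimensional rigidity result (Theorem \ref{shoresult1a}) by pushing $F$ down along complex geodesics and invoking Theorem \ref{theo3}. First I would fix an arbitrary complex geodesic $\varphi : \Delta \to D$ with $\tau \in \varphi(\partial\Delta)$; by precomposing with an automorphism of $\Delta$ I may assume $\varphi(1) = \tau$. Setting $f_\varphi(\xi) := (d\tilde{\rho}_\varphi)_{\varphi(\xi)}(F(\varphi(\xi)))$, Theorem \ref{teo2} already gives $f_\varphi \in \text{HolG}(\Delta)$. The goal then becomes to establish
\[
K\text{-}\lim_{\xi \to 1} \frac{f_\varphi(\xi)}{|\xi - 1|^3} = 0,
\]
since Theorem \ref{shoresult1a} would then force $f_\varphi \equiv 0$ and, by the arbitrariness of $\varphi$, Theorem \ref{theo3} would yield $F \equiv 0$ on $D$.

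To verify the displayed limit I would transfer the non-tangential hypothesis on $F$ through $\varphi$. Complex geodesics extend smoothly up to $\overline{\Delta}$ with $\varphi'(1)$ transverse to $\partial D$, so there exist positive constants $c_1,c_2$ (depending on the chosen non-tangential sector) such that $c_1 |\xi - 1| \leq \|\varphi(\xi) - \tau\| \leq c_2 |\xi - 1|$ for $\xi$ in that sector near $1$, and the map $\xi \mapsto \varphi(\xi)$ sends non-tangential $1$-curves in $\Delta$ to non-tangential $\tau$-curves in $D$ (this is the prototype pointed out in Section \ref{sezione1}). Combining these two facts with the hypothesis $\angle\lim_{z\to\tau} F(z)/\|z - \tau\|^3 = 0$ yields $F(\varphi(\xi))/|\xi - 1|^3 \to 0$ as $\xi \to 1$ along any non-tangential curve in $\Delta$, which in the disc is equivalent to the $K$-limit. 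Since $\tilde{\rho}_\varphi$ extends smoothly to $\overline{D}$, the operator norm of $(d\tilde{\rho}_\varphi)_{\varphi(\xi)}$ stays uniformly bounded near $\xi = 1$, and the required vanishing of $f_\varphi(\xi)/|\xi - 1|^3$ follows.

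The main obstacle I anticipate is ensuring that the comparability $\|\varphi(\xi) - \tau\| \asymp |\xi - 1|$ and the pullback of approach regions under $\varphi$ are uniform over entire non-tangential sectors, and not merely along radii. Both facts follow from Lempert's boundary regularity of complex geodesics (recalled in Section \ref{sezione1}) together with the Taylor expansion $\varphi(\xi) = \tau + \varphi'(1)(\xi - 1) + O(|\xi - 1|^2)$ and the transversality of $\varphi'(1)$ to $\partial D$, but some care is needed to check these estimates uniformly on sectors rather than take them for granted. Once that uniformity is in hand, the remainder of the argument is essentially bookkeeping around Theorems \ref{shoresult1a}, \ref{teo2}, and \ref{theo3}.
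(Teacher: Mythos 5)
Your proposal is correct and follows essentially the same route as the paper: transfer the hypothesis along a complex geodesic $\varphi$ with $\varphi(1)=\tau$ using the boundary smoothness of $\varphi$ and $\tilde{\rho}_\varphi$ and the comparability $\lVert\varphi(\xi)-\tau\rVert = O(\lvert\xi-1\rvert)$, deduce the angular (equivalently $K$-) vanishing of $f_\varphi/\lvert\xi-1\rvert^3$ at $1$, and conclude via Theorems \ref{teo2}, \ref{shoresult1a} and \ref{theo3}. The uniformity issue you flag is handled in the paper simply by arguing along an arbitrary non-tangential $1$-curve rather than over whole sectors, which suffices for the definition of non-tangential limit.
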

\begin{proof} We use the same
notation introduced in Theorem \ref{theo3}.
Let \((\varphi,\rho_{\varphi},\tilde{\rho}_{\varphi})\) be
a projction device at \(\tau\) such that \(\varphi(1)=\tau\). Let
\(\gamma (t):[0,1)\rightarrow \Delta\) any non-tangential 1-curve and \(\beta
(t)=\varphi (\gamma (t))\);
clearly \(\beta\) is a non tangential \(\tau\)-curve since
\(\varphi (\overline{\Delta})\) is transversal to the boundary of \(D\). From the existence of the non-tangential limit in (\ref{eq17}) and since
\((d\tilde{\rho}_{\varphi})_{z}\) depends continuously up to the boundary on
\(z\in
D\), we have
\begin{equation*}
\lim_{t\rightarrow
1}\dfrac{(d\tilde{\rho}_{\varphi})_{\beta (t)}F(\beta (t))}{
\lVert \beta (t) -\tau \rVert^{3}}=0.
\end{equation*}
Since \(f_{\varphi}(\xi):=d(\tilde{\rho}_{\varphi})_{\varphi(\xi)}
(F(\varphi(\xi)))\), we have
\(f_{\varphi}(t):=d(\tilde{\rho}_{\varphi})_{\beta (t)}
(F(\beta(t))\) and thus\begin{equation*}
\lim_{t\rightarrow
1}\dfrac{f_{\varphi}(\gamma (t))}{
\lVert \beta (t)-\tau \rVert^{3}}=0 .\end{equation*}So we can
conclude that \begin{equation*} \lim_{t\rightarrow
1}\dfrac{f_{\varphi}(\gamma (t))}{
\vert \gamma (t)-1\vert^{3}}=\lim_{t\rightarrow
1}\dfrac{f_{\varphi}(\gamma (t))}{
\lVert \beta (t)-\tau \rVert^{3}}\times \dfrac{\lVert
\beta(t)-\tau \rVert^{3}}{\vert \gamma (t)- 1\vert^{3}}=0
\end{equation*} because the second ratio in the product remains bounded in a
neighborhood of \(1\) being \(\varphi\) \(\mathcal{C}^{\infty}\) up to the
boundary.
Since \(\gamma\) is any arbitrary non tangential \(1\)-curve in \(\Delta\), the
previous limits implies also the angular limit in \(\Delta\)
\begin{equation*} \angle\lim_{\xi\rightarrow
1}\dfrac{f_{\varphi}(\xi)}{\vert\xi- 1\vert^{3}}=0.\end{equation*} By
Theorem \ref{teo2} \(f_{\varphi}\) is in \(\text{\textit{HolG}}(\Delta)\) and
now using Theorem \ref{shoresult1a} (but see \cite[Proposition ~2]{shoikhet1}
also) we have \(f_{\varphi}\equiv 0\)
in \(\Delta\). The previous Theorem \ref{theo3} and the arbitrariness of
\(\varphi\) allows us to conclude that \(F\equiv 0\) on \(D\).
\end{proof}

\bibliographystyle{amsplain}
\bibliography{bibliografiaaggiornata}

\end{document}